\newcommand{\df}{\dfrac}
\newcommand{\x}{\xi}
\renewcommand{\(}{\left\(}
\renewcommand{\)}{\right\)}
\renewcommand{\[}{\left\[}
\renewcommand{\]}{\right\]}
\renewcommand{\i}{\infty}
\numberwithin{equation}{section}
 \theoremstyle{plain}
\newtheorem{theorem}{Theorem}[section]
\newtheorem{lemma}[theorem]{Lemma}
\newtheorem{definition}[theorem]{Definition}
\def\proof{\@ifnextchar[{\@oproof}{\@nproof}}
\def\@oproof[#1][#2]{\trivlist\item[\hskip\labelsep\textit{#2 Proof of\
#1.}~]\ignorespaces}
\def\@nproof{\trivlist\item[\hskip\labelsep\textit{Proof.}~]\ignorespaces}
\def\endproof{\qed\endtrivlist}
\begin{document}

\title[Generalized higher order spt-functions]{Generalized higher order spt-functions}

\author{Atul Dixit}
\address{Department of Mathematics, University of Illinois, Urbana, IL 61801,
USA} \email{aadixit2@illinois.edu}

\author{Ae Ja Yee}
\address{Department of Mathematics, Penn State University, University Park, PA 16802,
USA} \email{yee@math.psu.edu}

\dedicatory{Dedicated to our friends, Mourad Ismail and Dennis Stanton}

\footnotetext[1]{The second author was partially supported by National Security Agency Grant H98230-10-1-0205 and by the Australian Research Council.}

\footnotetext[2]{Keywords: partitions, rank, crank, rank moments, crank moments, smallest part functions, Durfee squares}

\footnotetext[3]{2000 AMS Classification Numbers: Primary, 11P81; Secondary,05A17}



\maketitle

\begin{abstract}
We give a new generalization of the spt-function of G.E.~Andrews, namely $\textup{Spt}_j(n)$, and give its combinatorial interpretation in terms of successive lower-Durfee squares. We then generalize the higher order spt-function $\textup{spt}_k(n)$, due to F.G.~Garvan, to ${}_j\textup{spt}_{k}(n)$, thus providing a two-fold generalization of $\textup{spt}(n)$, and give its combinatorial interpretation.
\end{abstract}

\section{Introduction}
Two fundamental statistics in the theory of partitions are Dyson's rank \cite{dys} and the Andrews-Garvan crank \cite{ang}. While the rank of a partition is defined as the largest part minus the number of parts, the crank is defined as the largest part if the partition contains no ones, and otherwise as the number of parts larger than the number of ones minus the number of ones. Dyson observed \cite{dys} that the rank of a partition could explain two of Ramanujan's famous partition congruences, namely,
\begin{align}
p(5n+4)&\equiv 0\hspace{1mm}(\textup{mod}\hspace{1mm} 5)\label{pc5}\\
p(7n+5)&\equiv 0\hspace{1mm}(\textup{mod}\hspace{1mm}7)\label{pc7},
\end{align}
but not the third one, i.e., 
\begin{equation}
p(11n+6)\equiv 0\hspace{1mm}(\textup{mod}\hspace{1mm}11)\label{pc11}.
\end{equation}
This led him to hypothesize the existence of another statistic, namely the crank, though its discovery \cite{ang} was not made until 1988.

Let $N(m,n)$ denote the number of partitions of $n$ with rank $m$. Then the rank generating function $R(z,q)$ is given by
\begin{equation}\label{rankgf}
R(z,q)=\sum_{n=0}^{\infty}\sum_{m=-\infty}^{\infty}N(m,n)z^mq^n=\sum_{n=1}^{\infty}\frac{q^{n^2}}{(zq)_{n}(z^{-1}q)_{n}}.
\end{equation}
Here, and in the sequel, we employ the standard notation
\begin{align*}
(A)_0 &:=(A;q)_0 =1, \qquad \\
(A)_n &:=(A;q)_n  = (1-A)(1-Aq)\cdots(1-Aq^{n-1}),
\qquad n \geq 1, \\
(A)_{\infty} &:=(A;q)_{\i}  = \lim_{n\to\i}(A;q)_n, \qquad |q|<1.
\end{align*}
Similarly, if $M(m,n)$ denote the number of partitions of $n$ with crank $m$, then the crank generating function $C(z,q)$ is given by
\begin{equation}\label{crankgf}
C(z,q)=\sum_{n=0}^{\infty}\sum_{m=-\infty}^{\infty}M(m,n)z^mq^n=\frac{(q)_{\infty}}{(zq)_{\infty}(z^{-1}q)_{\infty}}.
\end{equation}
The generating functions of $N(m,n)$ and $M(m,n)$ are respectively given by
\begin{align}
\sum_{n=0}^{\infty} N(m,n) q^n & =\frac{1}{(q)_{\infty}} \sum_{n=1}^{\infty} (-1)^{n-1} q^{n(3n-1)/2+ |m|n}(1-q^n) \label{rank}
\end{align}
and 
\begin{align}
\sum_{n=0}^{\infty} M(m,n) q^n & =\frac{1}{(q)_{\infty}} \sum_{n=1}^{\infty} (-1)^{n-1} q^{n(n-1)/2+ |m|n}(1-q^n). \label{crank}
\end{align}
In \cite{atkin-garvan}, Atkin and Garvan introduced the rank and crank moments which are defined by
\begin{equation}\label{rankm}
N_{t}(n):=\sum_{m=-\infty}^{\infty}m^tN(m,n)
\end{equation}
and
\begin{equation}\label{crankm}
M_{t}(n):=\sum_{m=-\infty}^{\infty}m^tM(m,n)
\end{equation}
respectively. The above series are really finite series with $m$ ranging from $-n$ to $n$. The odd moments of rank and crank equal zero. This follows from the facts that $N(m,n)=N(-m,n)$ and $M(m,n)=M(-m,n)$, which in turn are easy consequences of (\ref{rank}) and (\ref{crank}). 

Recently, Andrews \cite{andrews} defined the smallest part function $\textup{spt}(n)$ as the total number of appearances of the smallest parts in all the partitions of $n$ and showed that
\begin{align}\label{sptpn}
\textup{spt}(n)=np(n)-\frac{1}{2}N_2(n),
\end{align}
where $p(n)$ is the number of partitions of $n$ and  $N_2(n)$ is the second Atkin-Garvan rank moment defined in (\ref{rankm}). Andrews proved (\ref{sptpn}) by obtaining an identity involving the generating functions of $\textup{spt}(n)$, $np(n)$ and $N_{2}(n)$, i.e.,
\begin{align}\label{andrerel}
\sum_{m=1}^{\infty}\frac{q^{m}}{(1-q^{m})^{2}(q^{m+1};q)_{\infty}}=\frac{1}{(q;q)_{\infty}}\sum_{n=1}^{\infty}\frac{nq^n}{1-q^n}+\frac{1}{(q;q)_{\infty}}\sum_{n=1}^{\infty}\frac{(-1)^nq^{n(3n+1)/2}(1+q^n)}{(1-q^n)^2}.
\end{align}
To see his derivation, we first need Watson's $q$-analogue of Whipple's theorem \cite[Equation (2.2)]{andrews} given by
\begin{multline}\label{watson87}
_8\phi_7\left[\begin{matrix} a,& q\sqrt{a},& -q\sqrt{a},& b, &
c, &d, & e, & q^{-N}\\
 &\sqrt{a}, & -\sqrt{a},& \df{a
q}{b}, & \df{a q}{c}, & \df{a q}{d}, & \df{a q}{e}, & a
q^{N+1}\end{matrix}\,; q,
 \df{a^2q^{N+2}}{bcde}\right] \\
=\df{(a q)_N\left(\df{a q}{de}\right)_N}
{\left(\df{a q}{d}\right)_N\left(\df{a q}{e}\right)_N}
\ _4\phi_3\left[\begin{matrix}\df{a q}{bc},d, e,q^{-N}\\ \df{a q}{b},
\df{a q}{c}, \df{de q^{-N}}{a}\end{matrix}\,; q, q\right],
\end{multline}
where
\begin{align*}
_r\phi_{r-1}\left[\begin{matrix} a_1, a_2, \ldots, a_r\\
  b_1,  b_2, \ldots, b_{r-1} \end{matrix}\,; q,
z \right] =\sum_{n=0}^{\infty} \frac{(a_1)_n (a_2)_n \cdots (a_r)_n}{(q)_n (b_1)_n \cdots (b_{r-1})_n} z^n.
\end{align*}
Andrews obtained (\ref{andrerel}) by first specializing $d=e^{-1}=z$, then letting $b, c, N\to\infty$ and $a\to 1$ in (\ref{watson87}), thereby obtaining 
\begin{equation}\label{watspl}
\sum_{n=0}^{\infty}\frac{(z)_{n}(z^{-1})_{n}q^n}{(q)_{n}}=\frac{(zq)_{\infty}(z^{-1}q)_{\infty}}{(q)_{\infty}^{2}}\left(1+\sum_{n=1}^{\infty}\frac{(-1)^nq^{n(3n+1)/2}(1+q^n)(z){n}(z^{-1})_{n}}{(zq)_{n}(z^{-1}q)_{n}}\right),
\end{equation}
then taking the second derivative with respect to $z$ of both sides of (\ref{watspl}), and then letting $z=1$.

Now Andrews \cite{propro} has obtained a generalization of (\ref{watson87}) for $j\geq 1$ which is as follows:
{\allowdisplaybreaks\begin{align}\label{andrewswatsong}
 &_{2j+6} \phi_{2j+5} \left[\begin{matrix}
  a, & q\sqrt{a}, & -q\sqrt{a}, & b_{1}, & c_{1}, 
  & \cdots, & b_{j+1}, & c_{j+1}, q^{-N} \\
  \sqrt{a}, & -\sqrt{a}, & \frac{aq}{b_{1}}, & \frac{aq}{c_{1}}, 
  & \cdots, & \frac{aq}{b_{j+1}}, & \frac{aq}{c_{j+1}},& aq^{N+1}\end{matrix}\, ;q, \frac{a^{j+1}q^{N+j+1}}{b_{1}\cdots b_{j+1}c_{1}\cdots c_{j+1}} \right]\nonumber\\
  &=\frac{(aq)_{N}(\frac{aq}{b_{j+1}c_{j+1}})_{N}}{(\frac{aq}{b_{j+1}})_{N}(\frac{aq}{c_{j+1}})_{N}}\sum_{m_{1},\cdots, m_{j}\geq 0}\frac{(\frac{aq}{b_{1}c_{1}})_{m_{1}}(\frac{aq}{b_{2}c_{2}})_{m_{2}}\cdots (\frac{aq}{b_{j}c_{j}})_{m_{j}}(b_{2})_{m_{1}}(c_{2})_{m_{1}}(b_{3})_{m_{1}+m_{2}}(c_{3})_{m_{1}+m_{2}}}{(q)_{m_{1}}(q)_{m_{2}}\cdots (q)_{m_{j}}(\frac{aq}{b_{1}})_{m_{1}}(\frac{aq}{c_{1}})_{m_{1}}(\frac{aq}{b_{2}})_{m_{1}+m_{2}}(\frac{aq}{c_{2}})_{m_{1}+m_{2}}}\nonumber\\
  &\times\frac{\cdots (b_{j+1})_{m_{1}+\cdots+m_{j}}(c_{j+1})_{m_{1}+\cdots+m_{j}}(q^{-N})_{m_{1}+\cdots+m_{j}}(aq)^{m_{j-1}+2m_{j-2}+\cdots+(j-1)m_{1}}q^{m_{1}+\cdots+m_{j}}}{\cdots (\frac{aq}{b_{j}})_{m_{1}+\cdots+m_{j}}(\frac{aq}{c_{j}})_{m_{1}+\cdots+m_{j}}(\frac{b_{j+1}c_{j+1}}{aq^{N}})_{m_{1}+\cdots+m_{j}}(b_{2}c_{2})^{m_{1}}(b_{3}c_{3})^{m_{1}+m_{2}}\cdots (b_{j}c_{j})^{m_{1}+\cdots+m_{j-1}}}.
 \end{align}}
It then seems natural to generalize Andrews' approach by specializing (\ref{andrewswatsong}) to obtain an identity similar to (\ref{watspl}) and then taking second derivatives with respect to $z$ of both sides of this identity to obtain a generalization of (\ref{sptpn}). This may then lead us to a generalization of Andrews' spt-function. In this paper, we show that this is indeed the case, i.e., we obtain a generalization of $\textup{spt}(n)$ (which we denote by $\textup{Spt}_{j}(n)$), and of (\ref{sptpn}). We also provide a combinatorial interpretation of $\textup{Spt}_j(n)$.

To see how (\ref{sptpn}) can be generalized, we first need to generalize $N(m,n)$. This was done by Garvan \cite{garvan_successive} who generalized Dyson's rank to $j$-rank which is defined as follows. For a partition $\pi$, define $n_{1}(\pi), n_{2}(\pi)\cdots$ to be the sizes of the successive Durfee squares of $\pi$. Then the $j$-rank of the partition $\pi$ is defined as the `number of columns in the Ferrers graph of $\pi$ which lie to the right of the first Durfee square and whose length $\leq n_{j-1}(\pi)$ minus the number of parts of $\pi$ that lie below the $(j-1)$st-Durfee square'. When 
$j=2$, this gives Dyson's rank. Let $N_{j}(m,n)$ be the number of partitions of $n$ with at least $j-1$ successive Durfee squares whose $j$-rank is equal to $m$. Then Garvan showed that for $j\ge 2$, 
\begin{align}
\sum_{n=0}^{\infty} N_{j}(m,n)q^n =\frac{1}{(q)_{\infty}} \sum_{n=1}^{\infty} (-1)^{n-1} q^{n((2j-1)n-1)/2+|m|n}(1-q^n). \label{Nk}
\end{align}
and 
\begin{align}\label{R_k}
R_j(z,q)&=\sum_{n=1}^{\infty} \sum_{m=-\infty}^{\infty}  N_j(m,n)z^mq^n  \notag\\
&=\sum_{n_{j-1}\ge \cdots \ge n_1\ge 1 } \frac{q^{n_1^2+\cdots +n_{j-1}^2}}{(q)_{n_{j-1}-n_{j-2}} \cdots (q)_{n_2-n_1} (zq)_{n_1} (z^{-1}q)_{n_1}} \notag\\
&= \frac{z}{(q)_{\infty}} \sum_{n=-\infty \atop n\neq 0}^{\infty} (-1)^{n-1} q^{n((2j-1)n+1)/2} \frac{1-q^n}{1-zq^n}.
\end{align}
Now (\ref{Nk}) readily implies that $N_{j}(m,n)=N_{j}(-m,n)$. Define the $j$-rank moment ${}_jN_{t}(n)$, analogous to (\ref{rankm}) and (\ref{crankm}), by
\begin{equation}\label{3rm}
{}_jN_{t}(n):=\sum_{m=-\infty}^{\infty}m^tN_{j}(m,n).
\end{equation}
The above series is really a finite series with $m$ ranging from $-n$ to $n$. It is easy to see that for odd $t$, we have ${}_jN_{t}(n)=0$. When $j=2$, ${}_2N_{t}(n)$ is the same as the Atkin-Garvan rank moment $N_{t}(n)$. Also, $j=1$ corresponds to the crank moment $M_{t}(n)$, i.e., ${}_1N_{t}(n)=M_{t}(n)$.

We show in Section 2 that (\ref{sptpn}) can be generalized to
\begin{align}\label{sptpng}
\textup{Spt}_j(n)=np(n)-\frac{1}{2}{}_{j+1}N_2(n),
\end{align}
where $\textup{Spt}_j(n)$ is defined in (\ref{sptjn}) below.

Dyson \cite{dyson} proved that for $n>1$, 
\begin{align}\label{fdyson}
np(n)=\frac{1}{2} M_2(n),
\end{align}
where $M_2(n)$ is defined in (\ref{crankm}). Thus, in \cite{andrews}, Andrews indeed studied the difference of the second moments of crank and rank.
Inspired by Andrews' results, Garvan \cite{garvan} investigated a further relationship by studying the difference of the $2k$-th symmetrized moments of rank and crank. He considered the higher order smallest part function $\textup{spt}_k(n)$, that specializes to $\textup{spt}(n)$ for $k=1$, and he discovered many interesting arithmetic properties of $\textup{spt}_k(n)$. Garvan first defined the symmetrized crank moment $\mu_{k}(n)$ by
\begin{equation}\label{1symj}
\mu_{k}(n)=\sum_{m=-\infty}^{\infty}\binom{m+\lfloor \frac{k-1}{2}\rfloor}{k}M(m,n)
\end{equation}
and showed that
\begin{align*}
\sum_{n=1}^{\infty} \mu_{2k}(n)q^n &=\frac{1}{(2k)!} \left.\left(\left(\frac{d}{dz}\right)^{2k} z^{k-1} C(z,q)\right) \right |_{z=1},
\end{align*}
where $C(z,q)$ is defined in (\ref{crankgf}).
From  \cite{andrews1}, we have
\begin{align*}
\sum_{n=1}^{\infty} \eta_{2k}(n) q^n = \frac{1}{(2k)!} \left. \left(\left(\frac{d}{dz}\right)^{2k} z^{k-1} R(z,q)\right) \right |_{z=1},
\end{align*}
where $R(z,q)$ is defined in (\ref{rankgf}) and
\begin{equation}\label{2symj}
\eta_{k}(n)=\sum_{m=-\infty}^{\infty}\binom{m+\lfloor \frac{k-1}{2}\rfloor}{k}N(m,n).
\end{equation}
He \cite{garvan} then defined the higher order smallest part function $\textup{spt}_k(n)$ as
\begin{align*}
\textup{spt}_k(n)=\mu_{2k}(n)-\eta_{2k}(n),
\end{align*}
and proved that
\begin{align}\label{garvansptk}
\sum_{n=1}^{\infty}\textup{spt}_k(n)q^n=\sum_{n_k\ge \cdots \ge  n_1\ge 1} \frac{q^{n_1+\cdots+n_{k}}} {(1-q^{n_k})^2\cdots (1-q^{n_1})^2(q^{n_1+1};q)_{\infty}}.
\end{align}



If we define the $k$-th symmetrized $j$-rank function by
\begin{equation}\label{ksymj}
{}_j\mu_{k}(n):=\sum_{m=-\infty}^{\infty}\binom{m+\lfloor \frac{k-1}{2}\rfloor}{k}N_{j}(m,n),
\end{equation}
then it is easy to see that ${}_1\mu_{k}(n)=\mu_{k}(n)$ and ${}_2\mu_{k}(n)=\eta_k(n)$. Therefore, a natural question is to see if it is possible to generalize the work of Andrews and Garvan using the $2k$-th symmetrized moments of $j$-rank. We do this here by generalizing $\textup{spt}_k(n)$ to ${}_j\textup{spt}_k(n)$. When $k=1$, we show how ${}_j\textup{spt}_1(n)$ can be represented in terms of $\textup{Spt}_j(n)$.

Garvan \cite{garvan} proved that $M_{2k}(n)>N_{2k}(n)$ for all $k\geq 1$ and $n\geq 1$. To prove this inequality, he used an analogue of Stirling numbers of the second kind, namely $S^{*}(n,k)$, to relate the ordinary and symmetrized moments. The numbers $S^{*}(n,k)$ are defined by \cite{garvan}
\begin{equation*}
x^{2n}=\sum_{k=1}^{n}S^{*}(n,k)g_{k}(x),
\end{equation*}
for $n\geq 1$, where for $k\geq 1$,
\begin{equation*}
g_k(x)=\prod_{j=0}^{k-1}(x^2-j^2).
\end{equation*}
The above inequality between the rank and crank moments can be easily generalized to the following inequality between moments of $j$-rank and $(j+1)$-rank.
\begin{theorem}\label{genineq}
For all $j, k ,n\geq 1$, let ${}_jN_{k}(n)$ be defined in \textup{(\ref{3rm})}. Then,
\begin{equation}
{}_jN_{2k}(n)>{}_{j+1}N_{2k}(n).
\end{equation}
\end{theorem}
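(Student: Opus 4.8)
The plan is to adapt Garvan's argument for the special case $M_{2k}(n)>N_{2k}(n)$, which is exactly the case $j=1$ of the theorem since ${}_1N_t(n)=M_t(n)$ and ${}_2N_t(n)=N_t(n)$. The device that makes this work is the pair $S^*(k,i)$, $g_i(x)$: writing $x^{2k}=\sum_{i=1}^{k}S^*(k,i)g_i(x)$ and evaluating at $x=m$ linearizes the ordinary $2k$-th moment of (\ref{3rm}) as a positive combination of the $g_i$-moments $\sum_{m}g_i(m)N_j(m,n)$. First I would record the elementary fact that, because $N_j(m,n)=N_j(-m,n)$ (immediate from (\ref{Nk})), the odd part of the polynomial $(2i)!\binom{m+i-1}{2i}$ drops out of the sum over $m$, leaving its even part, which is precisely $g_i(m)$; hence $\sum_{m}g_i(m)N_j(m,n)=(2i)!\,{}_j\mu_{2i}(n)$, where ${}_j\mu_{2i}(n)$ is the symmetrized $j$-rank moment of (\ref{ksymj}). (A one-line check at $i=1,2$ confirms that the even part of $\prod_{s=-i}^{i-1}(m+s)$ equals $g_i(m)$; the general case is the same computation Garvan uses.)

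Combining these gives
\begin{equation*}
{}_jN_{2k}(n)=\sum_{i=1}^{k}S^*(k,i)(2i)!\,{}_j\mu_{2i}(n),
\end{equation*}
and subtracting the identical expansion with $j$ replaced by $j+1$ yields
\begin{equation*}
{}_jN_{2k}(n)-{}_{j+1}N_{2k}(n)=\sum_{i=1}^{k}S^*(k,i)(2i)!\bigl({}_j\mu_{2i}(n)-{}_{j+1}\mu_{2i}(n)\bigr).
\end{equation*}
Since $S^*(k,i)>0$, the theorem reduces to showing that each summand ${}_j\mu_{2i}(n)-{}_{j+1}\mu_{2i}(n)$ is nonnegative and that at least one is strictly positive.

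The crux is this last positivity statement, which is exactly the assertion that the generalized higher order spt-function ${}_j\mathrm{spt}_i(n):={}_j\mu_{2i}(n)-{}_{j+1}\mu_{2i}(n)$ is nonnegative; for $j=1$ it is Garvan's $\mathrm{spt}_i(n)=\mu_{2i}(n)-\eta_{2i}(n)$, positive by the manifestly nonnegative Durfee-square series (\ref{garvansptk}). To handle general $j$ I would produce the analogue of (\ref{garvansptk}): starting from the generating function $\sum_n{}_j\mu_{2i}(n)q^n=\frac{1}{(2i)!}\bigl[(\tfrac{d}{dz})^{2i}z^{i-1}R_j(z,q)\bigr]_{z=1}$ (which follows from (\ref{ksymj}) exactly as in the rank and crank cases) together with the multisum form of $R_j(z,q)$ in (\ref{R_k}), I would carry out the $z$-differentiation and recognize $\sum_n{}_j\mathrm{spt}_i(n)q^n$ as a sum over successive Durfee squares with visibly nonnegative coefficients. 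Granting this, every summand is $\ge 0$; strictness then comes from the $i=1$ term, where $S^*(k,1)>0$ and, using ${}_j\mu_2(n)=\tfrac12{}_jN_2(n)$ with (\ref{sptpng}), one has ${}_j\mathrm{spt}_1(n)=\mathrm{Spt}_j(n)-\mathrm{Spt}_{j-1}(n)$ (with $\mathrm{Spt}_0(n)=0$ by (\ref{fdyson})), which is positive whenever the relevant moment does not vanish.

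The main obstacle is precisely establishing the nonnegativity of ${}_j\mathrm{spt}_i(n)$ in the form of a positive $q$-series, i.e. performing the $z$-differentiation of the multisum (\ref{R_k}) for both $j$ and $j+1$ and showing that the difference collapses into a Durfee-square sum with nonnegative coefficients. This is the computation the remainder of the paper develops for ${}_j\mathrm{spt}_k(n)$; the $S^*$-linearization above is the routine outer layer that reduces Theorem \ref{genineq} to it.
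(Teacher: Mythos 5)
Your proposal is correct and follows essentially the same route as the paper: the $S^{*}(k,t)$, $g_t$ linearization reduces the theorem to the positivity of ${}_j\mu_{2t}(n)-{}_{j+1}\mu_{2t}(n)={}_j\textup{spt}_t(n)$, which the paper reads off from the manifestly nonnegative multisum generating function of Theorem \ref{jsptk} (equivalently \eqref{remark}). The only divergence is in how that positive $q$-series is obtained --- the paper derives it from Garvan's Bailey-pair theorem (Theorem \ref{ThmGarvan}) by subtracting the $r=j-1$ instance from the $r=j$ instance, rather than by direct $z$-differentiation of the multisum form of $R_j(z,q)$ as you sketch --- but you correctly identify that step as the essential input, and the outer reduction matches the paper's proof exactly.
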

This paper is organized as follows. In Section 2, we prove (\ref{sptpng}). Then in Section 3, we give a combinatorial interpretation of $\textup{Spt}_j(n)$ and explain the motivation behind generalizing Garvan's $\textup{spt}_k(n)$ to ${}_j\textup{spt}_{k}(n)$ by studying the difference of two $2k$-th symmetrized $j$-rank functions. In Section 4, we prove some lemmas involving the $k$-th symmetrized $j$-rank function and obtain the generating function of ${}_j\textup{spt}_{k}(n)$. In Section 5, we give a combinatorial interpretation of ${}_j\textup{spt}_{k}(n)$. Finally, in Section 6, we prove Theorem \ref{genineq}.

\section{Proof of (\ref{sptpng})}

We first prove the following result.
\begin{theorem}\label{genn}
We have
\begin{align}\label{genn1}
&\sum_{n_{j}\geq 1}\sum_{n_{j-1}\geq\cdots\geq n_{1}\geq 0}
\frac{q^{n_1^2+\cdots +n_{j-1}^2+ n_{j}} (q)_{n_{j}} }{(q)_{n_1} (q)_{n_2-n_1}\cdots (q)_{n_j-n_{j-1}} (1-q^{n_{j}})^2(q^{n_{j}+1})_{\infty}} \nonumber\\
&=\frac{1}{(q)_{\infty}}\sum_{n=1}^{\infty}\frac{nq^n}{1-q^n}+\frac{1}{(q)_{\infty}}\sum_{n=1}^{\infty}\frac{(-1)^nq^{n((2j+1)n+1)/2}(1+q^n)}{(1-q^n)^2}.
\end{align}
\end{theorem}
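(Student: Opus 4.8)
The plan is to imitate Andrews' derivation of \eqref{andrerel}, replacing Watson's transformation \eqref{watson87} by its multiparameter extension \eqref{andrewswatsong}. First I would specialize \eqref{andrewswatsong} by putting $b_{j+1}=z$ and $c_{j+1}=z^{-1}$, then letting $b_1,\dots ,b_j$ and $c_1,\dots ,c_j$ all tend to $\infty$, next $N\to\infty$, and finally $a\to1$. For $j=1$ this is precisely the specialization Andrews used, so the outcome should be the natural $j$-analogue of \eqref{watspl}.

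Carrying these limits through the two sides is the core of the argument. On the left of \eqref{andrewswatsong} the very-well-poised factor $\frac{(a)_n(1-aq^{2n})}{(1-a)(q)_n}$ tends to $1+q^n$, the pair $(b_{j+1},c_{j+1})=(z,z^{-1})$ contributes $\frac{(z)_n(z^{-1})_n}{(zq)_n(z^{-1}q)_n}$, and each of the $j$ remaining pairs $(b_i,c_i)\to\infty$ leaves a factor $q^{n^2-n}$ against the $1/(b_ic_i)$ in the argument; together with the factor $(-1)^nq^{\binom n2+(j+1)n}$ coming from the $q^{-N}$, $aq^{N+1}$ entries and the $q^{N+j+1}$ in the argument, the total power of $q$ collapses to $n((2j+1)n+1)/2$, so the left side tends to
\begin{equation*}
1+\sum_{n=1}^{\infty}\frac{(-1)^n q^{n((2j+1)n+1)/2}(1+q^n)(z)_n(z^{-1})_n}{(zq)_n(z^{-1}q)_n}.
\end{equation*}
On the right, the prefactor $\frac{(aq)_N(aq/b_{j+1}c_{j+1})_N}{(aq/b_{j+1})_N(aq/c_{j+1})_N}$ becomes $\frac{(q)_\infty^2}{(zq)_\infty(z^{-1}q)_\infty}$ because $b_{j+1}c_{j+1}=1$, while in the $j$-fold sum every factor carrying $b_i,c_i$ with $i\le j$ tends to $1$ or, after cancelling the matching power in the denominator, leaves $q^{N_i^2-N_i}$ with $N_i=m_1+\cdots+m_i$; using $\sum_{i=1}^{j-1}(j-i)m_i=\sum_{i=1}^{j-1}N_i$ to rearrange the explicit $q$-powers, the exponent simplifies to $\sum_{i=1}^{j-1}N_i^2+N_j$. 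Writing $n_i=N_i$ and clearing the prefactor yields the $j$-analogue of \eqref{watspl}:
\begin{equation*}
\sum_{n_j\ge\cdots\ge n_1\ge 0}\frac{q^{n_1^2+\cdots+n_{j-1}^2+n_j}(z)_{n_j}(z^{-1})_{n_j}}{(q)_{n_1}(q)_{n_2-n_1}\cdots (q)_{n_j-n_{j-1}}}=\frac{(zq)_\infty(z^{-1}q)_\infty}{(q)_\infty^2}\left(1+\sum_{n=1}^{\infty}\frac{(-1)^n q^{n((2j+1)n+1)/2}(1+q^n)(z)_n(z^{-1})_n}{(zq)_n(z^{-1}q)_n}\right).
\end{equation*}

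Next I would apply $-\tfrac12\,\tfrac{d^2}{dz^2}$ and set $z=1$, exactly as in the $j=1$ case. Since $(z)_m(z^{-1})_m=-(1-z)^2[(q)_{m-1}]^2+O((1-z)^3)$ vanishes to second order at $z=1$, the left side becomes $\sum_{n_j\ge1}\sum_{n_{j-1}\ge\cdots\ge n_1\ge0}\frac{q^{n_1^2+\cdots+n_{j-1}^2+n_j}[(q)_{n_j-1}]^2}{(q)_{n_1}\cdots(q)_{n_j-n_{j-1}}}$. On the right I write the product as $P(z)S(z)$ with $P(z)=\frac{(zq)_\infty(z^{-1}q)_\infty}{(q)_\infty^2}$; here $P(1)=S(1)=1$ and $P'(1)=S'(1)=0$, so the cross term drops and only $-\tfrac12P''(1)-\tfrac12S''(1)$ survives. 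Logarithmic differentiation gives $P''(1)=(\log P)''(1)=-2\sum_{k\ge1}\frac{q^k}{(1-q^k)^2}=-2\sum_{n\ge1}\frac{nq^n}{1-q^n}$, and termwise $\frac{d^2}{dz^2}\big|_{z=1}\frac{(z)_n(z^{-1})_n}{(zq)_n(z^{-1}q)_n}=\frac{-2}{(1-q^n)^2}$ produces the theta term. Thus both sides reduce to \eqref{genn1} with the common factor $1/(q)_\infty$ deleted; restoring it and using $\frac{[(q)_{n-1}]^2}{(q)_\infty}=\frac{(q)_n}{(1-q^n)^2(q^{n+1})_\infty}$ on the left converts the multiple sum into the form displayed in \eqref{genn1}.

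The main obstacle is the first stage: justifying that the limits $b_i,c_i\to\infty$, $N\to\infty$, and $a\to1$ may be taken termwise inside the $(j+1)$-fold basic hypergeometric sum, and keeping straight the bookkeeping of which factors survive and with what power of $q$. A clean way to handle this is to fix the summation indices, evaluate each term's limit explicitly as above, and dominate the terms uniformly by a convergent majorant for $|q|<1$ so that dominated convergence applies. Once the $z$-identity is in hand, the differentiation and the final simplification are routine and mirror the $j=1$ reduction to \eqref{andrerel}.
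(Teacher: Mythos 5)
Your proposal is correct and follows essentially the same route as the paper: the paper's Lemma 2.2 performs exactly the specialization $b_{j+1}=z=c_{j+1}^{-1}$, $b_i,c_i\to\infty$, $N\to\infty$, $a\to1$ in \eqref{andrewswatsong} to obtain the $z$-identity \eqref{kn1} (identical to the one you derive), and the paper then applies $\frac{d^2}{dz^2}\big|_{z=1}$ using \eqref{f2d}, \eqref{eis2}, \eqref{der3}, \eqref{der4} just as you do. Your limit bookkeeping and the final simplification $\frac{[(q)_{n-1}]^2}{(q)_\infty}=\frac{(q)_n}{(1-q^n)^2(q^{n+1})_\infty}$ both check out.
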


We first need the following two lemmas.
\begin{lemma}\label{lmn}
We have
\begin{align}\label{kn1}
&\sum_{n_{j}\geq n_{j-1}\geq\cdots\geq n_{1}\geq 0}\frac{(z)_{n_{j}}(z^{-1})_{n_{j}}q^{n_{1}^{2}+\cdots+n_{j-1}^{2}+n_{j}}}{(q)_{n_{1}}(q)_{n_{2}-n_{1}}\cdots (q)_{n_{j}-n_{j-1}}}\nonumber\\
&=\frac{(zq)_{\infty}(z^{-1}q)_{\infty}}{(q)_{\infty}^{2}}\left(1+\sum_{n=1}^{\infty}\frac{(-1)^nq^{n((2j+1)n+1)/2}(1+q^n)(z;q)_{n}(z^{-1};q)_n}{(zq;q)_n(z^{-1}q;q)_n}\right).
\end{align}
\end{lemma}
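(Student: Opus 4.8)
The plan is to deduce Lemma \ref{lmn} from Andrews' generalization \eqref{andrewswatsong} of Watson's $q$-analogue of Whipple's theorem by exactly the specialization that Andrews used for $j=1$ to pass from \eqref{watson87} to \eqref{watspl}. Concretely, in \eqref{andrewswatsong} I would set $b_{j+1}=z$ and $c_{j+1}=z^{-1}$, then let $b_i,c_i\to\infty$ for $1\le i\le j$, then $N\to\infty$, and finally $a\to 1$. Since $b_{j+1}c_{j+1}=1$, the prefactor $\frac{(aq)_N(\frac{aq}{b_{j+1}c_{j+1}})_N}{(\frac{aq}{b_{j+1}})_N(\frac{aq}{c_{j+1}})_N}$ collapses to $\frac{(aq)_N(aq)_N}{(\frac{aq}{z})_N(azq)_N}$, which tends to $\frac{(q)_\infty^2}{(zq)_\infty(z^{-1}q)_\infty}$; this is exactly the factor that appears (inverted) in the statement.

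First I would treat the left-hand $_{2j+6}\phi_{2j+5}$. Writing its general term in the summation index $n$ and using the very-well-poised collapse $\frac{(q\sqrt a)_n(-q\sqrt a)_n}{(\sqrt a)_n(-\sqrt a)_n}=\frac{1-aq^{2n}}{1-a}$, the specialization sends each $(\frac{aq}{b_i})_n,(\frac{aq}{c_i})_n\to 1$ while $(b_i)_n(c_i)_n\sim(b_ic_i)^nq^{n(n-1)}$; these $(b_ic_i)^n$ cancel against the argument $\frac{a^{j+1}q^{N+j+1}}{b_1\cdots b_{j+1}c_1\cdots c_{j+1}}$, leaving a net $q^{jn(n-1)}$ from the $j$ pairs with $1\le i\le j$. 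Combining $(q^{-N})_n\,q^{(N+j+1)n}$ as $N\to\infty$ and then letting $a\to1$ (using $\frac{(a)_n(1-aq^{2n})}{(1-a)(q)_n}\to 1+q^n$) turns the left side into $1+\sum_{n\ge1}(-1)^n q^{n((2j+1)n+1)/2}(1+q^n)\frac{(z)_n(z^{-1})_n}{(zq)_n(z^{-1}q)_n}$. A direct bookkeeping of the exponent gives $jn(n-1)+\binom n2+(j+1)n=\frac{n((2j+1)n+1)}{2}$, which is precisely the exponent in \eqref{kn1}.

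The heart of the argument is the right-hand multiple sum. Under the same limits every $(\frac{aq}{b_ic_i})_{m_i}$, $(\frac{aq}{b_i})$, $(\frac{aq}{c_i})$ tends to $1$; each pair $(b_{i+1})_{M_i}(c_{i+1})_{M_i}$ with $i\le j-1$ (where $M_i:=m_1+\cdots+m_i$) combines with its companion $(b_{i+1}c_{i+1})^{M_i}$ in the denominator to give $q^{M_i(M_i-1)}$, while the surviving pair $(b_{j+1})_{M_j}(c_{j+1})_{M_j}=(z)_{M_j}(z^{-1})_{M_j}$ has no such companion, and $\frac{(q^{-N})_{M_j}}{(\frac{q^{-N}}{a})_{M_j}}\to a^{M_j}\to1$. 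The decisive step is the change of variables $n_i:=M_i=m_1+\cdots+m_i$, so that $(q)_{m_i}=(q)_{n_i-n_{i-1}}$, the constraint becomes $n_j\ge\cdots\ge n_1\ge0$, and $(z)_{M_j}(z^{-1})_{M_j}=(z)_{n_j}(z^{-1})_{n_j}$. It then remains to verify that the accumulated exponent $\sum_{i=1}^{j-1}(M_i^2-M_i)+M_j+\sum_{i=1}^{j-1}(j-i)m_i$ (the last piece coming from $(aq)^{m_{j-1}+2m_{j-2}+\cdots+(j-1)m_1}$ as $a\to1$ together with $q^{m_1+\cdots+m_j}$) collapses to $n_1^2+\cdots+n_{j-1}^2+n_j$. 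This is a short Abel-summation identity, namely $\sum_{i=1}^{j-1}(j-i)(n_i-n_{i-1})=\sum_{i=1}^{j-1}n_i$, which cancels the linear term $-\sum_{i=1}^{j-1}n_i$ and leaves $\sum_{i=1}^{j-1}n_i^2+n_j$. Assembling the two sides, \eqref{andrewswatsong} becomes (theta sum) $=\frac{(q)_\infty^2}{(zq)_\infty(z^{-1}q)_\infty}\times$ (multiple sum of \eqref{kn1}), and dividing through yields \eqref{kn1}.

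The main obstacle, and the place where care is needed, is exactly this multiple-sum bookkeeping: keeping the three families of large parameters $b_i,c_i$ straight, confirming that the only surviving Pochhammer-over-power cancellations are the intended $q^{M_i(M_i-1)}$ together with $(z)_{M_j}(z^{-1})_{M_j}$, and pinning down the exponent via the Abel summation above. A secondary technical point is justifying the interchange of the (now infinite, after $N\to\infty$) summations with the limits in $b_i,c_i,a$; since every object is a well-defined power series in $q$ for $|q|<1$, I would argue this coefficientwise, noting that each power of $q$ receives contributions from only finitely many terms and that each factor converges.
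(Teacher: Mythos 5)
Your proposal is correct and is precisely the paper's proof: the authors' argument for Lemma \ref{lmn} consists exactly of the specialization $b_{j+1}=z=c_{j+1}^{-1}$, $b_i,c_i\to\infty$ ($1\le i\le j$), $N\to\infty$, $a\to1$ in \eqref{andrewswatsong}, with the simplification left to the reader. Your bookkeeping — the exponent $jn(n-1)+\binom{n}{2}+(j+1)n=n((2j+1)n+1)/2$ on the left, and the change of variables $n_i=m_1+\cdots+m_i$ with the Abel-summation collapse to $n_1^2+\cdots+n_{j-1}^2+n_j$ on the right — correctly supplies the omitted details.
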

\begin{proof}
Let $b_{j+1}=z=c_{j+1}^{-1}$, $b_{1}, c_{1}, b_{2}, c_{2},\cdots b_{j}, c_{j}\to\infty, N\to\infty, a\to 1$ in (\ref{andrewswatsong}). This gives (\ref{kn1}) upon simplification.
\end{proof}

\begin{lemma}\label{d2rk}
We have
\begin{equation}
\left.\frac{d^2}{dz^2}R_{j}(z,q)\right|_{z=1}=\frac{-2}{(q)_{\infty}}\sum_{n=1}^{\infty}\frac{(-1)^nq^{n((2j-1)n+1)/2}(1+q^n)}{(1-q^n)^2}.
\end{equation}
\end{lemma}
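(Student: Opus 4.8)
The plan is to extract the second-derivative evaluation directly from the third expression for $R_j(z,q)$ given in (\ref{R_k}), namely
\begin{equation*}
R_j(z,q)=\frac{z}{(q)_{\infty}}\sum_{n\neq 0}(-1)^{n-1}q^{n((2j-1)n+1)/2}\,\frac{1-q^n}{1-zq^n},
\end{equation*}
since this is the form in which the $z$-dependence is simplest to differentiate. First I would set $f_n(z)=z(1-q^n)/(1-zq^n)$ for each fixed $n\neq 0$, so that $(q)_{\infty}R_j(z,q)=\sum_{n\neq 0}(-1)^{n-1}q^{n((2j-1)n+1)/2}f_n(z)$, and compute $f_n''(1)$. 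Differentiating $f_n(z)=z(1-q^n)(1-zq^n)^{-1}$ twice using the product and chain rules, one finds $f_n'(z)=(1-q^n)\bigl[(1-zq^n)^{-1}+zq^n(1-zq^n)^{-2}\bigr]$ and then $f_n''(z)=(1-q^n)\bigl[2q^n(1-zq^n)^{-2}+2zq^{2n}(1-zq^n)^{-3}\bigr]$. Evaluating at $z=1$, where $1-zq^n=1-q^n$, the factor $(1-q^n)$ cancels one power in each term, giving $f_n''(1)=2q^n(1-q^n)^{-1}+2q^{2n}(1-q^n)^{-2}$, which combines to $f_n''(1)=2q^n/(1-q^n)^2$.

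Substituting this back yields
\begin{equation*}
\left.\frac{d^2}{dz^2}R_j(z,q)\right|_{z=1}=\frac{1}{(q)_{\infty}}\sum_{n\neq 0}(-1)^{n-1}q^{n((2j-1)n+1)/2}\,\frac{2q^n}{(1-q^n)^2}.
\end{equation*}
The remaining step is to fold the sum over negative $n$ into the sum over positive $n$. Replacing $n$ by $-n$ in the negative-index terms, the exponent $n((2j-1)n+1)/2$ becomes $n((2j-1)n-1)/2$ and the extra factor $q^n$ pairs up so that the two ranges combine into a single sum over $n\geq 1$ with a symmetrizing factor $(1+q^n)$; simultaneously $(-1)^{n-1}$ becomes $-(-1)^n$. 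Concretely, I expect the positive-$n$ contribution to carry $q^{n((2j-1)n+1)/2}\cdot q^n = q^{n((2j-1)n+3)/2}$ and the reflected negative-$n$ contribution to carry $q^{n((2j-1)n-1)/2}\cdot q^{-n}$ — so the bookkeeping must be done carefully to see both terms acquire the common prefactor $q^{n((2j-1)n+1)/2}$ times $(1+q^n)$. After this reindexing the sign $(-1)^{n-1}$ flips to $(-1)^n$ and the factor of $2$ comes out front, producing exactly the claimed right-hand side with the overall sign $-2/(q)_{\infty}$.

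The only genuine obstacle is the index-folding and exponent-matching in the final step: one must track how the Gaussian-type exponent $n((2j-1)n+1)/2$ transforms under $n\mapsto -n$, confirm that the shifted powers of $q$ reassemble into the symmetric $(1+q^n)$ factor, and verify that the sign and the factor of $2$ come out correctly. The differentiation itself is routine once $f_n$ is isolated, and interchanging differentiation with the (absolutely convergent, for $|q|<1$) sum over $n$ is justified termwise. An alternative sanity check, which I would keep in reserve, is to specialize $j=2$ and compare against the known second-moment computation underlying (\ref{sptpn}) via (\ref{watspl}), since Lemma \ref{d2rk} with $j=2$ should reproduce the rank-moment term appearing in Andrews' relation (\ref{andrerel}).
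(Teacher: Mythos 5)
Your approach is the same as the paper's: differentiate the bilateral series form of $R_j(z,q)$ from (\ref{R_k}) twice in $z$, evaluate at $z=1$ (your $f_n''(1)=2q^n/(1-q^n)^2$ agrees exactly with the paper's intermediate formula), and then fold the $n<0$ terms onto $n>0$. The one piece of bookkeeping you flagged but left open resolves because the denominator also transforms under $n\mapsto -n$: since $(1-q^{-n})^{-2}=q^{2n}(1-q^n)^{-2}$, the reflected term carries $q^{n((2j-1)n-1)/2}\cdot q^{-n}\cdot q^{2n}=q^{n((2j-1)n+1)/2}$, which matches the positive-$n$ prefactor and yields the $(1+q^n)$ factor with the claimed overall $-2/(q)_{\infty}$.
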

\begin{proof}
From (\ref{R_k}), we have
\begin{equation}
R_{j}(z,q)=\frac{z}{(q)_{\infty}} \sum_{n=-\infty \atop n\neq 0}^{\infty} (-1)^{n-1} q^{n((2j-1)n+1)/2} \frac{1-q^n}{1-zq^n}.
\end{equation}
Differentiating both sides with respect to $z$, we have
\begin{equation}
\frac{d^2}{dz^2}R_{j}(z,q)=\frac{-2}{(q)_{\infty}} \sum_{n=-\infty \atop n\neq 0}^{\infty} (-1)^{n} q^{n((2j-1)n+1)/2+n} \frac{1-q^n}{(1-zq^n)^3}.
\end{equation}
Now let $z=1$ to see that
\begin{align}
\left.\frac{d^2}{dz^2}R_{j}(z,q)\right|_{z=1}&=\frac{-2}{(q)_{\infty}} \sum_{n=-\infty \atop n\neq 0}^{\infty} \frac{(-1)^{n} q^{n((2j-1)n+1)/2+n}} {(1-q^n)^2}\nonumber\\
&=\frac{-2}{(q)_{\infty}}\sum_{n=1}^{\infty}\frac{(-1)^nq^{n((2j-1)n+1)/2}(1+q^n)}{(1-q^n)^2}.
\end{align}
\end{proof}

\begin{proof}[Theorem \textup{\ref{genn}}][]
The idea is to take the second derivative on both sides of (\ref{kn1}) with respect to $z$ and then let $z=1$. Since \cite[Equation (2.1)]{andrews} 
\begin{equation}\label{f2d}
\left[\frac{d^{2}}{dz^{2}}(1-z)(1-z^{-1})f(z)\right]_{z=1}=-2f(1),
\end{equation}
we see that
\begin{align}\label{dlhs}
&\left[\frac{d^2}{dz^2}\sum_{n_{j}\geq n_{j-1}\geq\cdots\geq n_{1}\geq 0}\frac{(z)_{n_{j}}(z^{-1})_{n_{j}}q^{n_{1}^{2}+\cdots+n_{j-1}^{2}+n_{j}}}{(q)_{n_{1}}(q)_{n_{2}-n_{1}}\cdots (q)_{n_{j}-n_{j-1}}}\right]_{z=1}\nonumber\\
&=-2\sum_{n_{j}\geq 1}\sum_{n_{j-1}\geq\cdots\geq n_{1}\geq 0}\frac{(q)_{n_{j}-1}^{2}q^{n_{1}^{2}+\cdots+n_{j-1}^{2}+n_{j}}}{(q)_{n_{1}}(q)_{n_{2}-n_{1}}\cdots(q)_{n_{j}-n_{j-1}}}. \nonumber\\
\end{align}
From \cite[Equation (2.4)]{andrews}, we have
\begin{equation}\label{eis2}
\left[\frac{d^2}{dz^2}\frac{(zq)_{\infty}(z^{-1}q)_{\infty}}{(q)_{\infty}^2}\right]_{z=1}=-2\sum_{n=1}^{\infty}\frac{nq^{n}}{1-q^n}.
\end{equation}
Now
\begin{equation}\label{der3}
\left[\frac{d}{dz}\left(1+\sum_{n=1}^{\infty}\frac{(-1)^nq^{n((2j-1)n+1)/2}(1+q^n)(z)_{n}(z^{-1})_n}{(zq)_n(z^{-1}q)_n}\right)\right]_{z=1}=0.
\end{equation}
Using (\ref{f2d}), we have
\begin{align}\label{der4}
&\left[\frac{d^2}{dz^2}\left(1+\sum_{n=1}^{\infty}\frac{(-1)^nq^{n((2j-1)n+1)/2}(1+q^n)(z)_{n}(z^{-1})_n}{(zq)_n(z^{-1}q)_n}\right)\right]_{z=1}\nonumber\\
&=-2\sum_{n=1}^{\infty}\frac{(-1)^nq^{n((2j-1)n+1)/2}(1+q^n)}{(1-q^n)^2}.
\end{align}
Then from (\ref{kn1}), (\ref{dlhs}), (\ref{eis2}), (\ref{der3}) and (\ref{der4}), we obtain (\ref{genn1}) upon simplification. This completes the proof.
\end{proof}

Now define $\textup{Spt}_j(n)$ by
\begin{equation}\label{sptjn}
\sum_{n=1}^{\infty}\textup{Spt}_j(n)q^n:=\sum_{n_{j}\geq 1}\sum_{n_{j-1}\geq\cdots\geq n_{1}\geq 0}\frac{q^{n_{j}}}{(1-q^{n_{j}})^2(q^{n_{j}+1})_{\infty}}\left[{n_{j}\atop n_{j-1}}\right]\cdots \left[{n_{2}\atop n_{1}}\right]q^{n_{1}^{2}+\cdots+n_{j-1}^{2}},
\end{equation}
where
\begin{align*}
\left[\begin{matrix} n\\m \end{matrix}\right]=\begin{cases} \frac{(q)_n}{(q)_{m} (q)_{n-m}}, & \text{ if $0\le m\le n$},\\
0, & \text{ otherwise.}
\end{cases}
\end{align*}
From \cite[Equation (3.3)]{andrews}, we have
\begin{equation}\label{gfnpn}
\sum_{n=1}^{\infty}np(n)q^n=\frac{1}{(q)_{\infty}}\sum_{n=1}^{\infty}\frac{nq^n}{1-q^n}.
\end{equation}
Also, from (\ref{R_k}) and the fact that the odd moments of $j$-rank are equal to zero, we have 
\begin{align}
\left.\frac{d^2}{dz^2}R_{j}(z,q)\right|_{z=1}&=\sum_{n=1}^{\infty}\sum_{m=-\infty}^{\infty}m(m-1)N_{j}(m,n)q^n\nonumber\\
&=\sum_{n=1}^{\infty}\sum_{m=-\infty}^{\infty}m^2N_{j}(m,n)q^n\nonumber\\
&=\sum_{n=1}^{\infty}{}_jN_{2}(n)q^n.
\end{align}
Along with Lemma \ref{d2rk}, this implies
\begin{equation}\label{jn2n}
-\frac{1}{2}\sum_{n=1}^{\infty}{}_jN_{2}(n)q^n=\frac{1}{(q)_{\infty}}\sum_{n=1}^{\infty}\frac{(-1)^nq^{n((2j-1)n+1)/2}(1+q^n)}{(1-q^n)^2}.
\end{equation}
Finally, Theorem \ref{genn} along with (\ref{sptjn}), (\ref{gfnpn}) and (\ref{jn2n}) gives (\ref{sptpng}). This completes the proof.

\textbf{Remarks.} 1. If $j>n$, then ${}_{j+1}N_{2}(n)=0$ as $N_{j+1}(m,n)=0$. Then (\ref{sptpng}) implies that 
\begin{equation}\label{jgn}
\textup{Spt}_j(n)=np(n),\hspace{1mm}\textup{for}\hspace{1mm} j>n.
\end{equation}
This, along with (\ref{fdyson}) gives
\begin{equation}
\textup{Spt}_{\infty}(n)=np(n)=\frac{1}{2}M_{2}(n),
\end{equation}
so that we have by using (\ref{sptpng}), 
\begin{equation}
\textup{Spt}_{\infty}(n)-\textup{Spt}_{1}(n)=\frac{1}{2}N_{2}(n).
\end{equation}
Also, in view of (\ref{pc5})-(\ref{pc7}), we see that, 
\begin{equation}
\textup{Spt}_j(\ell n+m)\equiv 0\hspace{1mm}(\textup{mod}\hspace{1mm} \ell)\hspace{1mm}\textup{for}\hspace{1mm}(\ell,m)=(5,4), (7,5)\hspace{1mm}\textup{and}\hspace{1mm}(11,6)\hspace{1mm}\textup{and}\hspace{1mm}j>\ell n+m.
\end{equation}
2. Note that from (\ref{sptpng}), we have
\begin{equation}\label{sptdiff}
\textup{Spt}_{j}(n)-\textup{Spt}_{j-1}(n)=\frac{1}{2}\left({}_jN_{2}(n)-{}_{j+1}N_{2}(n)\right).
\end{equation}

\section{A combinatorial interpretation of $\textup{Spt}_j(n)$}

Here we give a combinatorial interpretation of $\textup{Spt}_j(n)$ defined in (\ref{sptjn}). 

For a partition $\pi$, we take the largest square that fits inside the Ferrers digram of $\pi$ starting from the lower left corner. We call this square the lower-Durfee square.  The partition $\pi$ can be divided into two portions: the square and the parts to its right, and the parts above the square. 
If there exists one, we take a second lower-Durfee square that fits inside $\pi$ right above the first lower-Durfee square. 
We can successively define lower-Durfee squares as long as there exist parts in the upper portion.  For an $s\ge 0$, we call these $s$ lower-Durfee squares defined from the bottom the $s$ successive lower-Durfee squares of $\pi$.


Throughout this paper,  if a positive integer occurs as a part in a partition, we mark all of its occurences with positive integers in an increasing order from the left to right. For instance, for $\pi=5+5+4+3+3+3$, we write $\pi=5_1+5_2+4_1+3_1+3_2+3_3$ and call the subscript of each part of that partition as its \emph{mark}.

Take a partition $\pi$ of $n$ and consider its $j-1$ successive lower-Durfee squares, and define a weight of $\pi$ by
\begin{align}\label{wjpi}
W_{j}(\pi)=\sum_{i_m} m,
\end{align} 
where the sum is over the part right above the $(j-1)$st lower-Durfee square if it exists and all the parts  that are contained in the $j-1$ successive lower-Durfee squares. For instance, let $j=3$ and 
$\pi=9_1+8_1+8_2+8_3+8_4+6_1+6_2+5_1+4_1+4_2+3_1$. Consider its first $2$ successive lower-Durfee squares of sides $3$ and $5$ as shown in Figure 1. Then, 
\begin{align*}
W_{3}(\pi)=2+3+4+1+2+1+1+2+1= 17.
\end{align*}
For $\pi=4_1+4_2+3_1+3_2+2_1$, we have
\begin{align*}
W_3(\pi)=1+2+1+2+1=7. 
\end{align*}
If $\pi$ has fewer than $j-1$ successive lower-Durfee squares, we define 
\begin{align*}
W_{j}(\pi)=\sum_{i_m} m,
\end{align*}
where the sum is over all the parts of $\pi$.  For instance, if $\pi=4_1+4_2$, then
\begin{align*}
W_3(\pi)= 1+2=3. 
\end{align*}
We show that the following theorem holds.
{\allowdisplaybreaks\begin{center}
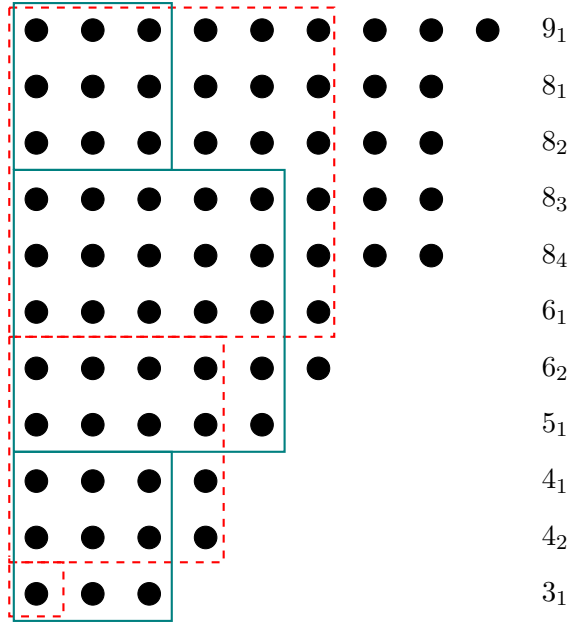
\begin{figure}
\begin{tikzpicture}[scale=3]
\foreach \x in {0,...,8}
	\filldraw (\x*.25, 0) circle (.5mm);
\foreach \x in {0,...,7}
	\filldraw (\x*.25, -.25) circle (.5mm);
	\foreach \x in {0,...,7}
	\filldraw (\x*.25, -.5) circle (.5mm);
	\foreach \x in {0,...,7}
	\filldraw (\x*.25, -.75) circle (.5mm);
	\foreach \x in {0,...,7}
	\filldraw (\x*.25, -1) circle (.5mm);
\foreach \x in {0,...,5}
	\filldraw (\x*.25, -1.25) circle (.5mm);
\foreach \x in {0,...,5}
	\filldraw (\x*.25, -1.5) circle (.5mm);
\foreach \x in {0,...,4}
	\filldraw (\x*.25, -1.75) circle (.5mm);
\foreach \x in {0,...,3}
	\filldraw (\x*.25, -2) circle (.5mm);
	\foreach \x in {0,...,3}
	\filldraw (\x*.25, -2.25) circle (.5mm);
	\foreach \x in {0,...,2}
	\filldraw (\x*.25, -2.5) circle (.5mm);
\draw[dashed, thick, red] (-0.12, 0.1) -- (1.32, 0.1) -- (1.32, -1.36) -- (-0.12, -1.36) -- (-0.12, 0.1);
\draw[dashed, thick, red] (-0.12, -1.36) -- (0.83, -1.36) -- (0.83, -2.36) -- (-0.12, -2.36) -- (-0.12, -1.33);
\draw[dashed, thick, red] (-0.12, -2.36) -- (0.12, -2.36) -- (0.12, -2.60) -- (-0.12, -2.60) -- (-0.12, -2.33);
\draw[thick, teal] (-.1,-1.87) -- (.6,-1.87) -- (.6,-2.62) -- (-0.1, -2.62) -- (-0.1,-1.87);	
\draw[thick, teal] (-.1,-1.87) -- (1.1,-1.87) -- (1.1,-0.62) -- (-0.1,-0.62) -- (-0.1,-1.87);	
\draw[thick, teal] (-.1,-0.62) -- (0.6, -0.62) -- (0.6, 0.12) -- (-.1, 0.12) -- (-0.1, -0.62);
\draw (2.3, 0) node {$9_{1}$};
\draw (2.3, -0.25) node {$8_{1}$};
\draw (2.3, -0.5) node {$8_{2}$};
\draw (2.3, -0.75) node {$8_{3}$};
\draw (2.3, -1.) node {$8_{4}$};
\draw (2.3, -1.25) node {$6_{1}$};
\draw (2.3, -1.5) node {$6_{2}$};
\draw (2.3, -1.75) node {$5_{1}$};
\draw (2.3, -2) node {$4_{1}$};
\draw (2.3, -2.25) node {$4_{2}$};
\draw (2.3, -2.5) node {$3_{1}$};

\end{tikzpicture}
\caption{$\pi: 9_1+8_1+8_2+8_3+8_4+6_1+6_2+5_1+4_1+4_2+3_1$.}
\end{figure}
\end{center}}
\begin{theorem}\label{sptwkpi}
With $\textup{Spt}_j(n)$ and $W_j(\pi)$ defined in \textup{(\ref{sptjn})} and \textup{(\ref{wjpi})} respectively, we have
\begin{align*}
\textup{Spt}_j(n)=\sum_{\pi} W_j(\pi),
\end{align*}
where the sum is over all partitions of $n$.
\end{theorem}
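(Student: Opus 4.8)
The plan is to equip the generating function (\ref{sptjn}) with a combinatorial meaning and to show that, after collecting all contributions of a fixed partition $\pi$, one recovers exactly $W_j(\pi)$; summing over $\pi$ of $n$ then identifies the coefficient of $q^n$ with $\sum_{|\pi|=n}W_j(\pi)$. Since (\ref{sptjn}) is just the left-hand side of (\ref{genn1}) rewritten through the Gaussian binomials, I would work directly with the summand $\frac{q^{n_j}}{(1-q^{n_j})^2(q^{n_j+1})_\infty}\left[{n_j\atop n_{j-1}}\right]\cdots\left[{n_2\atop n_1}\right]q^{n_1^2+\cdots+n_{j-1}^2}$ and attach to each factor a piece of the Ferrers diagram of $\pi$, drawn with its smallest parts at the bottom.

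The factor interpretation I would use is as follows. Writing $\frac{q^{n_j}}{(1-q^{n_j})^2}=\sum_{s\ge 1}sq^{sn_j}$ exposes a combinatorial weight $s$: it records the copies of the part $n_j$ lying immediately above the successive lower-Durfee squares, the weight $s$ amounting to a distinguished choice among them (this double pole is precisely the device that produces the spt-weight in Andrews' case $j=1$). The factor $\frac{1}{(q^{n_j+1})_\infty}$ contributes the parts strictly larger than $n_j$; these sit at the top of the diagram and carry weight $0$. Finally $q^{n_1^2+\cdots+n_{j-1}^2}\left[{n_j\atop n_{j-1}}\right]\cdots\left[{n_2\atop n_1}\right]$ builds the bottom of the diagram: $q^{n_1^2+\cdots+n_{j-1}^2}$ supplies the $j-1$ successive lower-Durfee squares of sides $n_1,\dots,n_{j-1}$, and each $\left[{n_{i+1}\atop n_i}\right]$, read as the generating function for a partition inside an $n_i\times(n_{i+1}-n_i)$ box, supplies the cells to the right of the $i$th square. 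Reassembling these pieces in decreasing order of part-size produces a partition $\pi$ of $n$, and the only nontrivial weight that survives is the $s$ coming from the double pole.

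The heart of the argument is then the bookkeeping lemma: for a fixed $\pi$, the sum of the weights $s$ over all ways of realizing $\pi$ from this summand equals $\sum_{i_m}m$, where the sum runs over the rows contained in the $j-1$ successive lower-Durfee squares together with the single part directly above them, that is, over the bottom $D+1$ rows, $D$ being the total height of the $j-1$ squares. The mechanism, which I would verify first in the transparent case $j=2$ and then in general, is that as the square sides $n_1,\dots,n_{j-1}$ are varied the distinguished part moves upward through the relevant rows, and for each admissible choice the weight $s$ equals precisely the mark (the position from the top among equal parts) of the corresponding relevant row; summing over all admissible choices then reproduces $W_j(\pi)=\sum_{i_m}m$. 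The degenerate situations have to be handled separately: when the portion above the $(j-1)$st square is empty there is no part above and the sum runs over the square rows only, and when $\pi$ has fewer than $j-1$ successive lower-Durfee squares every row is relevant, which matches the ``fewer than $j-1$'' clause in the definition of $W_j$.

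I expect this bookkeeping step to be the main obstacle, for two reasons. First, the indices obey $n_1\le\cdots\le n_{j-1}$, whereas the sides of the successive lower-Durfee squares of a fixed $\pi$ need not be monotone (for instance $4+4+4+4+4$ has lower-Durfee sides $4$ then $1$); hence the $n_i$ cannot be literally identified with the square sides of $\pi$, and one must instead range over all admissible dissections and let the summation reconstruct the correct squares. Second, a single part-value may occur both inside a square and among the rows above it, so the global marking couples the two regions, and one must check that the double pole correctly accounts for the marks on both sides of the square boundary. Once the weight-preserving correspondence between admissible dissections of $\pi$ and its relevant rows is established, the theorem follows by summing over all partitions of $n$.
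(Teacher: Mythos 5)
Your plan follows the same route as the paper's proof: read the right-hand side of (\ref{sptjn}) as generating a pair consisting of a top partition $\nu$ with smallest part $n_j$ weighted by its multiplicity, and a bottom partition $\mu$ produced by the Gaussian-binomial sum, and then show that for a fixed $\pi$ the admissible splittings are exactly those whose cut lies at one of the bottom $d+1$ rows ($d$ being the number of parts inside the $j-1$ successive lower-Durfee squares), each cut contributing the mark of the part just above it. The weight bookkeeping itself is sound: the cut point determines how many copies of a repeated part land in $\nu$, so the factor $\sum_{s\ge1}sq^{sn_j}$ automatically produces the mark, and this also disposes of your worry about a part value straddling the square boundary; your treatment of the degenerate cases matches the definition of $W_j$.

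The genuine gap is the step you yourself flag as ``the main obstacle'' and then leave unresolved: determining which prefixes $\mu^i=\pi_1+\cdots+\pi_i$ (parts listed in increasing order) are actually generated by $\sum\left[{n_j\atop n_{j-1}}\right]\cdots\left[{n_2\atop n_1}\right]q^{n_1^2+\cdots+n_{j-1}^2}$. That sum generates precisely the partitions with at most $j-1$ successive \emph{Durfee} squares, counted from the top, whereas the rows singled out by $W_j$ are governed by the successive \emph{lower}-Durfee squares, built from the bottom; as you note, the sides of the latter need not be monotone, so the $n_i$ cannot be matched to them directly. The paper closes exactly this gap with two induction lemmas (Lemmas \ref{lemma1} and \ref{lemma3}): a Rogers--Ramanujan partition with $s$ successive lower-Durfee squares has its lower-Durfee squares coinciding with its Durfee squares, and, more generally, a partition with exactly $s$ successive lower-Durfee squares has exactly $s$ successive Durfee squares. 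With this equivalence one sees that $\mu^i$ has at most $j-1$ successive Durfee squares precisely when $i\le d$, so the admissible cuts are exactly $i=0,\dots,d$ and no others. Without it, your ``weight-preserving correspondence between admissible dissections and relevant rows'' is asserted rather than proved; in particular, the claim that no cut above row $d+1$ survives (equivalently, that a prefix containing the $j-1$ lower-Durfee squares plus one further part must have at least $j$ successive Durfee squares) is exactly the nontrivial content. Supplying these two lemmas would complete your argument along the paper's lines.
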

Before we proceed, we make some remarks on lower-Durfee squares. Suppose that $\pi$ has exactly $s$ successive lower-Durfee squares of sides $d_1, d_2, \ldots, d_s$ from the bottom to top. Then 
\begin{align}
d_i\le d_{i+1}
\end{align}
for $i=1,\ldots, s-2$. However, it is not necessary that $d_{s-1}\le d_s$ since $d_s<d_{s-1}$ if there exist less than $d_{s-1}$ parts above the $s-1$st lower-Durfee square.  Also, for $i=1,\ldots, s-1$, all the parts below the $i$th lower-Durfee square cannot exceed $d_i$. If all parts below the $s$-th lower-Durfee square are less than or equal to $d_s$, we call $\pi$ a \emph{Rogers-Ramanujan partition with $s$ successive lower-Durfee squares}. 

The dotted lines in Figure 1 form the successive Durfee squares of the partition whereas the complete lines form the successive lower-Durfee squares.

We now prove two lemmas which are crucial for the proof of Theorem \ref{sptwkpi}.  

\begin{lemma} \label{lemma1}
Let $\pi$ be a Rogers-Ramanujan partition with $s$ successive lower-Durfee squares. Then $\pi$ is a partition with exactly $s$ successive Durfee squares. Indeed, the lower-Durfee squares form the Durfee squares. 
\end{lemma}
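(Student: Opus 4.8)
The plan is to induct on the number $s$ of successive lower-Durfee squares, peeling off the topmost square and reducing to a partition with one fewer square. First I would pin down the side of a single lower-Durfee square: for a partition $\mu$ with parts $\mu_1\ge\cdots\ge\mu_m$, the largest square anchored at the lower-left corner occupies the bottom $d$ rows and the left $d$ columns, so it fits exactly when $m\ge d$ and $\mu_m\ge d$; hence its side is $\min(m,\mu_m)$. I would record this as the basic computational fact underlying everything (and note that it is what gives the sides $3$ and $5$ in Figure~1).

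Next I would set up notation for $\pi=\lambda_1\ge\cdots\ge\lambda_\ell$ with exactly $s$ successive lower-Durfee squares of sides $d_1,\ldots,d_s$ (bottom to top), writing $D_i=d_i+\cdots+d_s$ (with $D_{s+1}=0$) so that the $i$-th square occupies rows $D_{i+1}+1,\ldots,D_i$ and columns $1,\ldots,d_i$, and so that ``exactly $s$ squares'' forces $\ell=D_1=d_1+\cdots+d_s$. Applying the side formula to the truncated partition $\lambda_1,\ldots,\lambda_{D_i}$, whose lower-Durfee square is precisely the $i$-th square of $\pi$, and using $D_i>d_i$ for $i<s$, I obtain $d_i=\lambda_{D_i}$ for every $i<s$. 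This immediately yields the already-stated monotonicity property that every part below the $i$-th lower-Durfee square is $\le\lambda_{D_i}=d_i$; it is the only general fact needed.

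The heart of the argument is the base of each inductive step. Under the Rogers-Ramanujan hypothesis every part below the topmost ($s$-th) square is $\le d_s$, while the $d_s$ rows of that square all have length $\ge d_s$, so $\lambda_{d_s}\ge d_s$ and $\lambda_{d_s+1}\le d_s$. Thus the ordinary (upper) Durfee square of $\pi$ has side exactly $d_s$ and occupies the same rows $1,\ldots,d_s$ and columns $1,\ldots,d_s$ as the top lower-Durfee square: the two coincide. I would then delete these top $d_s$ rows. The remaining partition $\lambda_{d_s+1},\ldots,\lambda_\ell$ has the same lower-Durfee squares $d_1,\ldots,d_{s-1}$, since the computation from the bottom is unaffected, hence exactly $s-1$ of them; and by the monotonicity property with $i=s-1$ its parts below the new top square are $\le d_{s-1}$, so it is again a Rogers-Ramanujan partition. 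The inductive hypothesis then identifies its lower-Durfee squares with its successive Durfee squares, and splicing the first square back on shows $\pi$ has exactly $s$ successive Durfee squares, equal to its lower-Durfee squares.

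The base case $s=1$ is immediate: there $\ell=d_1\le\lambda_\ell$, so all parts are $\ge\ell$, the diagram contains an $\ell\times\ell$ square filling all rows, and $\pi$ has a single Durfee square of side $\ell=d_1$. The step I expect to need the most care is the bookkeeping that the squares stack flush to fill all $\ell$ rows (so the top square sits in exactly rows $1,\ldots,d_s$) together with the verification that truncation preserves the Rogers-Ramanujan property so the induction actually closes; once the side formula $\min(m,\mu_m)$ and the identity $d_i=\lambda_{D_i}$ are in hand, the remainder is routine.
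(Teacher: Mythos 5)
Your proof is correct and follows essentially the same route as the paper's: induct on $s$, use the Rogers--Ramanujan condition to show $\lambda_{d_s}\ge d_s$ and $\lambda_{d_s+1}\le d_s$ so that the topmost lower-Durfee square coincides with the first Durfee square, then peel it off and check the remainder is again Rogers--Ramanujan with $s-1$ lower-Durfee squares. The explicit side formula $\min(m,\mu_m)$ and the identity $d_i=\lambda_{D_i}$ for $i<s$ are just careful bookkeeping for facts the paper uses implicitly (e.g.\ its remark that all parts below the $i$th lower-Durfee square are at most $d_i$).
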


\proof
We prove by induction on $s$. Clearly, the statement holds true for $s=1$.  

For $s>1$, we now show that the $s$-th lower-Durfee square is the Durfee square of $\pi$.  By construction, since $\pi$ is a Rogers-Ramanujan partition, the part right below the $s$-th lower-Durfee square is less than or equal to $d_{s}$, so there are exactly $d_{s}$ parts greater than or equal to $d_{s}$. Thus, the first Durfee square of $\pi$ has to be of side $d_{s}$, namely the first Durfee square matches the $s$-th lower-Durfee square. The parts below the first Durfee square form a Rogers-Ramanujan partition with $s-1$ successive lower-Durfee squares. By induction hypothesis, it follows that the lower-Durfee squares form the Durfee squares. This completes the proof. 
\endproof

\begin{lemma}\label{lemma3}
Let $\pi$ be a partition with exactly $s$ successive lower-Durfee squares.  Then $\pi$ has exactly $s$ successive Durfee squares.
\end{lemma}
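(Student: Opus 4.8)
The plan is to prove the sharper statement $L(\pi)=D(\pi)$, where $L(\pi)$ and $D(\pi)$ denote the numbers of successive lower-Durfee and successive Durfee squares of $\pi$, by strong induction on the number of parts $\ell$ of $\pi$. The engine of the induction is that each construction strips a block of rows from one end. The bottom lower-Durfee square is the largest square anchored at the lower-left corner, and since the bottom row is the shortest, its side is $d_1'=\min(\lambda_\ell,\ell)$, governed by the smallest part $\lambda_\ell$; thus the upper portion is $\mu=(\lambda_1,\ldots,\lambda_{\ell-d_1'})$ and $L(\pi)=1+L(\mu)$. By the induction hypothesis $L(\mu)=D(\mu)$, so the whole statement reduces to a single claim about Durfee squares alone, which I will call the \emph{crux}: deleting the bottom $d_1'=\min(\lambda_\ell,\ell)$ rows of $\pi$ lowers the Durfee-square count by exactly one, i.e. $D(\pi)=1+D(\mu)$.

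To establish the crux I would run a second induction on $\ell$, this time peeling the \emph{first} (top) Durfee square, of side $d_1$, and writing $\nu$ for the partition formed by the rows strictly below it, so $D(\pi)=1+D(\nu)$, $|\nu|=\ell-d_1$, and $\nu$ has the same smallest part $\lambda_\ell$. The degenerate case $\lambda_\ell\ge\ell$ (here $\pi$ is \emph{fat}, meaning every part is at least the number of parts, so $D(\pi)=1=L(\pi)$) is immediate, so assume $\lambda_\ell<\ell$ and $d_1'=\lambda_\ell$. If $\ell-d_1'\ge d_1$, then the top $d_1$ rows survive untouched, $\mu$ retains $d_1$ as its first Durfee square, and the residual of $\mu$ is exactly $\nu$ with its own bottom $d_1'$ rows removed; the inductive hypothesis for $\nu$ (fewer parts, same smallest part $\lambda_\ell$) gives $D(\mu)=1+(D(\nu)-1)=D(\nu)=D(\pi)-1$. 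If instead $\ell-d_1'<d_1$, equivalently $d_1+\lambda_\ell>\ell$, then both $\mu$ and $\nu$ are fat: every surviving row of $\mu$ lies inside the first Durfee square, so its length is at least $d_1>|\mu|$, while every part of $\nu$ is at least $\lambda_\ell>|\nu|$; hence $D(\mu)=D(\nu)=1$ and $D(\pi)=2=1+D(\mu)$. In both cases the crux holds, and the main induction then closes at once via $D(\pi)=1+D(\mu)=1+L(\mu)=L(\pi)$, with base cases $\ell=0,1$ trivial.

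The hard part will be the crux, and specifically its second case. When $\pi$ fails to be a Rogers-Ramanujan partition, the bottom lower-Durfee square is strictly larger than the last Durfee square, so deleting its rows cuts \emph{across} a Durfee block rather than removing one block cleanly (in the worked example of Figure~1 the Durfee sides are $6,4,1$ whereas the lower-Durfee sides are $3,5,3$). This is exactly the configuration for which Lemma~\ref{lemma1} does not apply, since there the lower-Durfee and Durfee squares no longer coincide; the fatness analysis above is what replaces the clean matching of squares used in that proof. As a bookkeeping sanity check that exhibits the lockstep nature of the two peelings, I would also note that in either construction the successive square sides sum to $\ell$, since each square removes exactly its side's worth of rows until none remain.
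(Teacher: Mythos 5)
Your argument is correct, but it follows a genuinely different route from the paper's. The paper inducts on $s$ and splits into two cases: when $\pi$ is a Rogers--Ramanujan partition it simply invokes Lemma~\ref{lemma1} (the lower-Durfee squares \emph{are} the Durfee squares), and otherwise it peels the first Durfee square from the top, showing its side is strictly less than $d_s+d_{s-1}$ (because the smallest part inside the $(s-1)$st lower-Durfee square equals $d_{s-1}$), so that the rows below it still form a partition with exactly $s-1$ successive lower-Durfee squares and the induction hypothesis applies. You instead induct on the number of parts, peel the \emph{bottom lower-Durfee square}, and isolate the whole content of the lemma in the commutation statement $D(\pi)=1+D(\mu)$; your two cases --- the bottom block either disjoint from or overlapping the first Durfee square, the latter settled by the ``fat'' observation --- replace both the Rogers--Ramanujan dichotomy and the $d_s+d_{s-1}$ bound. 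What your version buys is self-containedness (Lemma~\ref{lemma1} is never used) and an explicit verification of the step the paper compresses into ``which implies that the parts below the first Durfee square of $\pi$ form a partition with $s-1$ successive lower-Durfee squares''; what the paper's version buys is brevity, since Lemma~\ref{lemma1} absorbs the clean case. I checked the delicate points in your crux --- that $\mu$ retains $d_1$ as its Durfee side when $\ell-\lambda_\ell\ge d_1$, that the inductive hypothesis applies to $\nu$ there because the bottom lower-Durfee side of $\nu$ is again $\min(\lambda_\ell,\ell-d_1)=\lambda_\ell$, and that $\mu$ and $\nu$ are nonempty and fat in the overlapping case since $\lambda_\ell<\ell$ forces $1\le d_1<\ell$ --- and they all hold, so the proof is complete as written.
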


\proof
We prove by induction on $s$.  For $s=1$, clearly $\pi$ is a Rogers-Ramanujan partition with one lower-Durfee square. Thus, the statement follows from Lemma~\ref{lemma1}.  

For $s>1$,  if $\pi$ is a Rogers-Ramanujan partition, then it follows from Lemma~\ref{lemma1}. Otherwise, we now show that the side of the first Durfee square of $\pi$ is less than $d_s+d_{s-1}$.  From construction of successive lower-Durfee squares, the smallest part in the $s-1$st lower-Durfee square is equal to its side $d_{s-1}$. Thus, the square of side $d_s+d_{s-1}$ cannot fit inside $\pi$, which implies that the parts below the first Durfee square of $\pi$ form a partition with $s-1$ successive lower-Durfee squares. It follows from the induction hypothesis that the partition with $s-1$ successive lower-Durfee squares has exactly $s-1$ successive Durfee squares. Therefore, $\pi$ has exactly $s$ successive Durfee squares. 
\endproof

\begin{proof}[Theorem \textup{\ref{sptwkpi}}][]
Consider the series on the right-hand side of (\ref{sptjn}). Since, for $n_j \ge 1$, 
\begin{align*}
\frac{q^{n_j}}{(1-q^{n_j})^2(q^{n_j+1})_{\infty}}=\frac{q^{n_j}+2q^{2n_j}+3q^{3n_j}+\cdots}{(q^{n_j+1})_{\infty}},
\end{align*}
the outer summation generates partitions $\nu$ with the smallest part equal to $n_{j}$ and weight equal to the number of occurrences of $n_{j}$.  Also, note that
\begin{align*}
\sum_{n_{j-1}\geq\cdots\geq n_{1}\geq 0}  \left[{n_{j}\atop n_{j-1}}\right]\cdots \left[{n_{2}\atop n_{1}}\right]q^{n_{1}^{2}+\cdots+n_{j-1}^{2}}
\end{align*}
generates Rogers-Ramanujan partitions $\mu$ with the largest part $\le n_{j}$ and at most $j-1$ successive Durfee squares. Thus, the union of the parts of $\mu$ and $\nu$ is a partition where the parts below the part $n_{j}$ form a Rogers-Ramanujan partition with at most $j-1$ successive Durfee squares. 

For a partition $\pi$ of $n$, we take successive lower-Durfee squares, whose sides are $d_1, d_2, \ldots$ from the bottom to top. If there are only $s$ lower-Durfee squares, we define $d_{i}=0$ for $i>s$.  For convenience, we write the parts of $\pi$ in increasing order, namely $\pi_1$ is the smallest, $\pi_2$ is the second smallest, etc.  For $i=0,\ldots, d$, $d=d_1+d_2+\cdots+d_{j-1}$, we define a pair of partition $\mu^i$ and $\nu^i$ by
\begin{align*}
\mu^i&=\pi_1+\cdots +\pi_i,\\
\nu^i&=\pi_{i+1}+\cdots. 
\end{align*}
From the construction, $\mu^i$ has at most $j-1$ successive lower-Durfee squares. Thus it follows from Lemma~\ref{lemma3} that $\mu^i$ has at most $j-1$ successive Durfee squares.  In addition,
we see that $\mu^i$ and $\nu^i$ for $i=0,\ldots, d$ are the only possible pairs for $\mu$ and $\nu$ generated by the right hand of \eqref{sptjn} which make $\pi$.  

Each pair $\mu^i$ and $\nu^i$ is counted with weight equal to the number of appearances of the smallest part of $\nu^i$ in $\nu^i$, namely $\pi_{i+1}$. By marking the same parts in increasing order as introduced at the beginning of this section, we see that the number of appearances of $\pi_{i+1}$ is its mark. So, the partition $\pi$ is generated by the right hand side of \eqref{sptjn} with weight equal to the sum of the marks of $\pi_{1}$ through $\pi_{d+1}$, which is exactly the same as $W_j(\pi)$. Therefore, the coefficient of $q^n$ on the left-hand side of (\ref{sptjn}) is equal to $\sum_{\pi} W_j (\pi)$. This completes the proof.
\end{proof}
\textbf{Remarks.} 1. When $j=1$, we take a partition $\pi$ of $n$ and consider $j-1=0$ successive lower Durfee squares. Thus, $W_1(\pi)$ is nothing but the number of appearances of the smallest part of $\pi$. Hence, $\textup{Spt}_1(n)=\textup{spt}(n)$.\\

2. By letting $j$ go to infinity in Theorem~\ref{sptwkpi}, we see that $\text{Spt}_{\infty}(n)$ counts the sum of the marks of the parts of all the partitions of $n$. \\

3. From (\ref{sptdiff}) and the fact that the odd moments of $j$-rank are equal to zero, we have
\allowdisplaybreaks{\begin{align}\label{sptmot}
\textup{Spt}_{j}(n)-\textup{Spt}_{j-1}(n)&=\frac{1}{2}\left({}_jN_{2}(n)-{}_{j+1}N_{2}(n)\right)\nonumber\\
&=\frac{1}{2}\sum_{m=-\infty}^{\infty}m^2\left(N_{j}(m,n)-N_{j+1}(m,n)\right)\nonumber\\
&=\frac{1}{2}\sum_{m=-\infty}^{\infty}(m^2-m)\left(N_{j}(m,n)-N_{j+1}(m,n)\right)\nonumber\\
&={}_j\mu_2(n)-{}_{j+1}\mu_2(n),
\end{align}}
where ${}_j\mu_{k}(n)$ is defined in (\ref{ksymj}). When $j=1$, we have seen that this gives nothing but $\textup{spt}(n)$ since by (\ref{fdyson}), we have $\textup{Spt}_0(n)=0$. In light of what Garvan has done for his higher-order spt-function, this gives us a motivation to study the difference ${}_j\mu_{2k}(n)-{}_{j+1}\mu_{2k}(n)$. We make the following definition:
\begin{definition} 
For $j,k \ge 1$, define
\begin{align}
{_j}\textup{spt}_k(n)  = {_j}\mu_{2k} (n)-{_{j+1}}\mu_{2k} (n).  \label{def3.4}
\end{align}
\end{definition}
We call ${_j}\textup{spt}_k(n)$ a \emph{generalized higher order spt-function}.

\section{Generating function for the generalized higher order spt-function ${}_j\textup{spt}_{k}(n)$}
We begin with some lemmas involving the $2k$-th symmetrized $j$-rank function which will be used in the sequel.


\begin{lemma} \label{lemma2.2}
For $j, k\ge 1$, we have
\begin{align*}
\sum_{n=1}^{\infty}{}_j\mu_{2k}(n)q^n = \frac{1}{(2k)!}  \left. \left(\left(\frac{d}{dz}\right)^{2k} z^{k-1} R_j(z,q)\right) \right |_{z=1}.
\end{align*}
\end{lemma}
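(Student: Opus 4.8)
The plan is to prove the identity by working term-by-term in the double series for $R_j(z,q)$, exactly mirroring the two special cases recorded earlier in the introduction for $C(z,q)$ (the case $j=1$) and $R(z,q)$ (the case $j=2$). First I would start from the expansion
\[
z^{k-1}R_j(z,q)=\sum_{n=1}^{\infty}\sum_{m=-\infty}^{\infty}N_j(m,n)\,z^{m+k-1}q^n,
\]
which is immediate from the definition (\ref{R_k}) of $R_j(z,q)$. For each fixed $n$ the inner sum over $m$ is genuinely finite, since $N_j(m,n)=0$ whenever $|m|>n$; hence the coefficient of each power $q^n$ is a Laurent polynomial in $z$, and differentiating $2k$ times under the summation signs and then setting $z=1$ is legitimate with no convergence issue to address.

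The heart of the argument is the single-monomial computation. Applying $(d/dz)^{2k}$ to $z^{m+k-1}$ and evaluating at $z=1$ produces the falling factorial
\[
\left.\left(\frac{d}{dz}\right)^{2k}z^{m+k-1}\right|_{z=1}=(m+k-1)(m+k-2)\cdots(m-k),
\]
a product of exactly $2k$ consecutive integers. Dividing by $(2k)!$ gives precisely the binomial coefficient $\binom{m+k-1}{2k}$. Since $\lfloor(2k-1)/2\rfloor=k-1$, this is exactly the weight appearing in the definition (\ref{ksymj}) of ${}_j\mu_{2k}(n)$. Summing the resulting terms back over $m$ and $n$ therefore reproduces $\sum_{n=1}^{\infty}{}_j\mu_{2k}(n)q^n$, which is the claimed identity.

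The only point that merits care — and honestly the sole place where anything could go wrong — is treating $\binom{m+k-1}{2k}$ consistently as the \emph{polynomial} (falling-factorial) binomial coefficient valid for every integer $m$, rather than only for $m\ge 2k-k+1$. I would verify that the product $(m+k-1)\cdots(m-k)$ vanishes exactly on the $2k$ integers $m\in\{-(k-1),\dots,k\}$, matching the zeros of the polynomial $\binom{m+k-1}{2k}$, so that the operator identity holds termwise for negative and small positive $m$ as well as for large $m$. Beyond this bookkeeping the lemma carries no substantive obstacle: it is a direct generalization of the known rank and crank statements, with the $j$-dependence entering only through the coefficients $N_j(m,n)$ and leaving the differentiation-and-evaluation step unchanged. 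This completes the plan.
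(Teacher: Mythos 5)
Your proposal is correct and follows essentially the same route as the paper: expand $z^{k-1}R_j(z,q)$ as $\sum_{m,n}N_j(m,n)z^{m+k-1}q^n$, apply $(d/dz)^{2k}$ termwise, evaluate at $z=1$ to get the falling factorial $(m+k-1)\cdots(m-k)$, and divide by $(2k)!$ to recover $\binom{m+k-1}{2k}$ as in the definition of ${}_j\mu_{2k}(n)$. Your extra remark about interpreting the binomial coefficient as a polynomial in $m$ is a reasonable clarification of a point the paper passes over silently.
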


\proof
Since
\begin{equation*}
z^{k-1}R_{j}(z,q)=\sum_{m=-\infty}^{\infty}\sum_{n=1}^{\infty}N_{j}(m,n)z^{m+k-1}q^n,
\end{equation*}
we have
\begin{align*}
\frac{1}{(2k)!} \left. \left(\left(\frac{d}{dz}\right)^{2k} z^{k-1} R_j(z,q)\right) \right |_{z=1}&=\frac{1}{(2k)!}\sum_{m=-\infty}^{\infty}\sum_{n=1}^{\infty}(m+k-1) 
\cdots (m-k)N_{j}(m,n)q^n\\ 
&=\frac{1}{(2k)!}\sum_{n=1}^{\infty}\sum_{m=-\infty}^{\infty}\frac{(m+k-1)!}{(m-k-1)!}N_{j}(m,n)q^n\\
&=\sum_{n=1}^{\infty}\sum_{m=-\infty}^{\infty}\binom{m+k-1}{2k}N_{j}(m,n)q^n \\
&=\sum_{n=1}^{\infty}{}_j\mu_{2k}(n)q^n,
\end{align*}
which completes the proof. 
\endproof

\begin{lemma}
For $j,k \ge 1$, we have
\begin{equation}\label{genjmu2k}
\sum_{n=1}^{\infty} {_j}\mu_{2k} (n) q^{n}=\frac{1}{(q)_{\infty}}\sum_{n=-\infty \atop n\neq 0}^{\infty}\frac{(-1)^{n-1} q^{n((2j-1)n+1)/2 +kn}}{(1-q^n)^{2k}}.
\end{equation}
\end{lemma}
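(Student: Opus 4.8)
The plan is to combine the preceding lemma with the product-free form of $R_j(z,q)$ recorded in (\ref{R_k}), thereby reducing the entire statement to a single elementary derivative identity. By Lemma \ref{lemma2.2}, the left-hand side of (\ref{genjmu2k}) equals $\frac{1}{(2k)!}\left.\left(\frac{d}{dz}\right)^{2k}z^{k-1}R_j(z,q)\right|_{z=1}$, so the first step is to insert the third expression in (\ref{R_k}) and absorb its leading factor $z$ into $z^{k-1}$, giving
\begin{equation*}
z^{k-1}R_j(z,q)=\frac{1}{(q)_{\infty}}\sum_{n=-\infty \atop n\neq 0}^{\infty}(-1)^{n-1}q^{n((2j-1)n+1)/2}(1-q^n)\,\frac{z^k}{1-zq^n}.
\end{equation*}
Treating the sum as a formal power series in $q$, I would move $\left(\frac{d}{dz}\right)^{2k}$ inside the summation, so that everything hinges on evaluating $\frac{d^{2k}}{dz^{2k}}\frac{z^k}{1-zq^n}$ at $z=1$ for each fixed $n$.

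The heart of the argument is the identity
\begin{equation*}
\frac{1}{(2k)!}\left.\frac{d^{2k}}{dz^{2k}}\frac{z^k}{1-zw}\right|_{z=1}=\frac{w^k}{(1-w)^{2k+1}},
\end{equation*}
regarded as an identity of rational functions in $w$, to be applied with $w=q^n$. I would prove it by expanding $\frac{z^k}{1-zw}=\sum_{m\ge 0}w^m z^{m+k}$, differentiating term by term, setting $z=1$, and reindexing $l=m-k$ to obtain $w^k\sum_{l\ge 0}\frac{(l+2k)!}{l!}w^l$. The negative-binomial series $\sum_{l\ge 0}\binom{l+2k}{2k}w^l=(1-w)^{-(2k+1)}$ then collapses this to the stated right-hand side. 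Since both sides are rational in $w$ and agree for $|w|<1$, they agree identically, and in particular the identity holds for $w=q^n$ for every nonzero $n$.

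With this identity in hand, the $n$-th term of the differentiated series becomes $(-1)^{n-1}q^{n((2j-1)n+1)/2}(1-q^n)\cdot\frac{q^{kn}}{(1-q^n)^{2k+1}}=(-1)^{n-1}\frac{q^{n((2j-1)n+1)/2+kn}}{(1-q^n)^{2k}}$, so that summing over $n\neq 0$ and dividing by $(q)_{\infty}$ yields exactly (\ref{genjmu2k}). As a sanity check, the case $k=1$ recovers $\frac{d}{dz}\frac{z}{1-zq^n}=\frac{1}{(1-zq^n)^2}$ and hence the factor $\frac{2q^n}{(1-q^n)^3}$ used implicitly in Lemma \ref{d2rk}.

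The step I expect to require the most care is not the derivative identity itself, which is a clean one-line generating-function calculation, but the justification for interchanging $\left(\frac{d}{dz}\right)^{2k}$ with the sum over $n$, together with the legitimacy of invoking the rational-function identity at the negative indices $n<0$, where $q^n$ exceeds $1$ in modulus. The clean way around both issues is to interpret each side of (\ref{genjmu2k}) as a formal power series in $q$: because the exponent $n((2j-1)n+1)/2$ grows quadratically in $n$, only finitely many $n$ contribute to any fixed power of $q$, so the termwise differentiation acts on each coefficient as a finite operation, and the rational-function identity is applied only after the denominators $(1-q^n)^{-(2k+1)}$ are expanded into genuine power series in $q$.
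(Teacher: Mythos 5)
Your proposal is correct, and it reaches (\ref{genjmu2k}) by a genuinely different route from the paper. The paper also starts from Lemma \ref{lemma2.2}, but then applies Leibniz's rule to $\left(\frac{d}{dz}\right)^{2k}\left(z^{k-1}R_j(z,q)\right)$, keeping $z^{k-1}$ and $R_j$ as separate factors: it computes the general derivative $R_j^{(m)}(z,q)=\frac{-m!}{(q)_{\infty}}\sum_{n\neq 0}\frac{(-1)^n q^{n((2j-1)n+1)/2+(m-1)n}(1-q^n)}{(1-zq^n)^{m+1}}$, assembles the finite Leibniz sum over $0\le m\le k-1$, and then collapses it with the binomial theorem via $\sum_{m=0}^{k-1}\binom{k-1}{m}(q^{-n}-1)^m=q^{-n(k-1)}$. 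You instead absorb the leading $z$ of $R_j$ into $z^{k-1}$ so that each summand becomes $\frac{z^k}{1-zq^n}$ times a $z$-free factor, and you evaluate $\frac{1}{(2k)!}\frac{d^{2k}}{dz^{2k}}\frac{z^k}{1-zw}\big|_{z=1}=\frac{w^k}{(1-w)^{2k+1}}$ in a single stroke via the negative-binomial series; I checked this identity and the resulting termwise simplification $(1-q^n)\cdot\frac{q^{kn}}{(1-q^n)^{2k+1}}=\frac{q^{kn}}{(1-q^n)^{2k}}$, and both are right. Your route buys a shorter computation with no Leibniz bookkeeping and no final binomial collapse (that collapse is, in effect, pre-packaged into your one rational-function identity), and your closing remarks on interchanging differentiation with the sum and on the indices $n<0$ (treat everything as a formal power series in $q$; the quadratic exponent $n((2j-1)n+1)/2$ ensures only finitely many $n$ touch each power of $q$, and the derivative identity holds as an identity of rational functions in $w$) address exactly the points the paper leaves implicit. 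The paper's organization has the mild advantage of exhibiting the individual derivatives $R_j^{(m)}(1,q)$, which parallels the earlier treatments of Andrews and Garvan, but as a proof of this lemma the two are equally rigorous.
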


\proof
By Leibniz's rule,
\begin{align*}
\frac{1}{(2k)!} \left.\left(\left(\frac{d}{dz}\right)^{2k} z^{k-1} R_j(z,q)\right)\right|_{z=1}=\frac{1}{(2k)!} \sum_{m=0}^{k-1} \binom{2k}{m} (k-1)\cdots (k-m) R_j^{(2k-m)}(1,q),
\end{align*}
and by \eqref{R_k},
\begin{align*}
R_j^{(m)}(z,q)=\frac{-m!}{(q)_{\infty}} \sum_{n=-\infty \atop n\neq 0}^{\infty}\frac{(-1)^n q^{n((2j-1)n+1)/2 +(m-1)n}(1-q^n)}{(1-zq^n)^{m+1}}.
\end{align*}
Hence, from Lemma~\ref{lemma2.2}, we see that
{\allowdisplaybreaks\begin{align*} 
&\sum_{n=1}^{\infty} {_j}\mu_{2k} (n) q^{n}\nonumber\\ &=\frac{1}{(2k)!}\left.\sum_{m=0}^{k-1}\binom{2k}{m}(k-1)\cdots(k-m)R_{j}^{(2k-m)}(z,q)\right|_{z=1}\nonumber\\
&=\frac{-1}{(2k)!(q)_{\infty}}\left.\sum_{m=0}^{k-1}(2k-m)!\binom{2k}{m}(k-1)\cdots(k-m)\sum_{n=-\infty \atop n\neq 0}^{\infty}\frac{(-1)^n q^{n((2j-1)n+1)/2 +(2k-m-1)n}(1-q^n)}{(1-zq^n)^{2k-m+1}}\right|_{z=1}\nonumber\\
&=\frac{1}{(q)_{\infty}}\sum_{n=-\infty \atop n\neq 0}^{\infty}\frac{(-1)^{n-1} q^{n((2j-1)n+1)/2 +(2k-1)n}}{(1-q^n)^{2k}}\sum_{m=0}^{k-1}\frac{(k-1)\cdots(k-m)}{m!}(q^{-n}-1)^{m}\nonumber\\
&=\frac{1}{(q)_{\infty}}\sum_{n=-\infty \atop n\neq 0}^{\infty}\frac{(-1)^{n-1} q^{n((2j-1)n+1)/2 +(2k-1)n}}{(1-q^n)^{2k}} \sum_{m=0}^{k-1} \binom{k-1}{m} (q^{-n}-1)^m \notag\\
&=\frac{1}{(q)_{\infty}}\sum_{n=-\infty \atop n\neq 0}^{\infty}\frac{(-1)^{n-1} q^{n((2j-1)n+1)/2 +(2k-1)n}}{(1-q^n)^{2k}} (1+q^{-n}-1)^{k-1} \notag\\
&=\frac{1}{(q)_{\infty}}\sum_{n=-\infty \atop n\neq 0}^{\infty}\frac{(-1)^{n-1} q^{n((2j-1)n+1)/2 + kn}}{(1-q^n)^{2k}}, \notag
\end{align*}}
where in the penultimate step, we used the binomial theorem $(a+1)^{\ell}=\sum_{t=0}^{\ell}\binom{\ell}{t}a^{t}$.
\endproof

We now need Garvan's theorem.
\begin{theorem}\cite[Theorem 3.3]{garvan}
Suppose $(\alpha_n,\beta_n)=(\alpha_n(1,q),\beta_n(1,q))$ is a Bailey pair with $a=1$ and $\alpha_0=1, \beta_0=1$.  Then
\begin{align*}
\sum_{n_k\ge \cdots \ge n_1\ge 1} \frac{(q)_{n_1}^2 q^{n_1+\cdots +n_{k}} \beta_{n_1}}{(1-q^{n_1})^2 \cdots (1-q^{n_k})^2}=\sum_{n_k\ge \cdots \ge n_1\ge 1} \frac{q^{n_1+\cdots +n_{k}}}{(1-q^{n_1})^2 \cdots (1-q^{n_k})^2} +\sum_{n=1}^{\infty} \frac{q^{k n}\alpha_n}{(1-q^n)^{2k}}.
\end{align*}  \label{ThmGarvan}
\end{theorem}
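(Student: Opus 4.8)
The plan is to reduce the identity to a single explicit multi-sum evaluation by feeding in the defining relation of the Bailey pair. Recall that $(\alpha_n,\beta_n)$ being a Bailey pair relative to $a=1$ means $\beta_{n}=\sum_{r=0}^{n}\frac{\alpha_r}{(q)_{n-r}(q)_{n+r}}$. First I would substitute this expression for $\beta_{n_1}$ into the left-hand side and interchange the order of summation so that the Bailey index $r$ is summed outermost. Since $(q)_{n_1}^2/[(q)_{n_1-r}(q)_{n_1+r}]$ reduces to $1$ when $r=0$, the term $r=0$ (where $\alpha_0=1$) contributes exactly $\sum_{n_k\ge\cdots\ge n_1\ge1}q^{n_1+\cdots+n_k}/\prod_{i}(1-q^{n_i})^2$, which is the first sum on the right-hand side. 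Everything therefore comes down to the contribution of the terms $r\ge1$.

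After the interchange, the coefficient of $\alpha_r$ (for $r\ge 1$) is the nested sum
\begin{align*}
T_r^{(k)}:=\sum_{n_k\ge\cdots\ge n_1\ge r}\frac{(q)_{n_1}^2\,q^{n_1+\cdots+n_k}}{(1-q^{n_1})^2\cdots(1-q^{n_k})^2\,(q)_{n_1-r}(q)_{n_1+r}},
\end{align*}
so the whole theorem is equivalent to the key lemma $T_r^{(k)}=q^{kr}/(1-q^r)^{2k}$ for every $r\ge1$. This is the claim I would isolate and prove. For the base case $k=1$ I would write $n_1=r+j$, use $(q)_{n_1}^2/(1-q^{n_1})^2=(q)_{n_1-1}^2$ together with the splittings $(q)_{r+j-1}=(q)_{r-1}(q^r;q)_j$ and $(q)_{2r+j}=(q)_{2r}(q^{2r+1};q)_j$, so that the sum over $j$ becomes $\frac{(q)_{r-1}^2}{(q)_{2r}}\,{}_2\phi_1(q^r,q^r;q^{2r+1};q,q)$. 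Since the argument $q$ equals $c/(ab)$ with $a=b=q^r,\ c=q^{2r+1}$, the $q$-Gauss summation applies and collapses the series to $(q)_{2r}/(q)_r^2$; after cancellation this yields exactly $q^{r}/(1-q^r)^2$.

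The general case I would attack by induction on $k$, and this is where the real difficulty lies. The naive idea of peeling off the outermost variable $n_k$ fails, because $\sum_{n_k\ge n_{k-1}}q^{n_k}/(1-q^{n_k})^2$ has no elementary closed form; and peeling off the innermost variable $n_1$ runs into the fact that its partial sum $\sum_{n_1=r}^{n_2}(q)_{n_1-1}^2q^{n_1}/[(q)_{n_1-r}(q)_{n_1+r}]$ telescopes cleanly only when $r=1$ and has no simple closed form for $r\ge2$. The main obstacle is therefore to find the correct induction-closing device: either an auxiliary multi-sum identity, obtained by the change of variables $m_1=n_1-r$ and $m_i=n_i-n_{i-1}$, which reduces $T_r^{(k)}$ to a $k$-fold sum over nonnegative $m_i$ that one hopes to evaluate by iterated $q$-Gauss / $q$-Chu--Vandermonde summations, or an operator/Bailey-chain argument in which the single-fold identity is iterated on a suitably defined new Bailey pair while carefully tracking how the $r=0$ boundary contribution propagates at each step. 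Showing that this iteration reproduces the clean $k$-th power $q^{kr}/(1-q^r)^{2k}$, rather than accumulating lower-order error terms, is the crux of the proof.

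\endinput
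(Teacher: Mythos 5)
The paper does not actually prove this statement --- it is quoted with a citation to Garvan \cite{garvan} --- so your attempt can only be measured against Garvan's own argument. Your reduction is the right one and is in fact his: insert the Bailey-pair relation $\beta_{n}=\sum_{r=0}^{n}\alpha_{r}/\bigl((q)_{n-r}(q)_{n+r}\bigr)$, observe that the $r=0$ term (with $\alpha_0=1$) produces the first sum on the right, and reduce the whole theorem to the key lemma $T_{r}^{(k)}=q^{kr}/(1-q^{r})^{2k}$, which is precisely Garvan's Lemma 3.2. Your $q$-Gauss evaluation of the base case $k=1$ is also correct.

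The proof is nevertheless incomplete: the inductive step is left as an acknowledged open problem, and the reason you give for abandoning the natural route is mistaken. You assert that the partial sum over the innermost variable ``telescopes cleanly only when $r=1$ and has no simple closed form for $r\ge 2$.'' In fact, for every $r\ge 1$ and every $N\ge r$,
\begin{equation*}
\sum_{m=r}^{N}\frac{(q)_{m-1}^{2}\,q^{m}}{(q)_{m-r}(q)_{m+r}}
=\frac{q^{r}}{(1-q^{r})^{2}}\cdot\frac{(q)_{N}^{2}}{(q)_{N-r}(q)_{N+r}},
\end{equation*}
which follows by induction on $N$ from the elementary identity $(1-q^{N})^{2}-(1-q^{N-r})(1-q^{N+r})=q^{N-r}(1-q^{r})^{2}$. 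Since $(q)_{n_1}^{2}/(1-q^{n_1})^{2}=(q)_{n_1-1}^{2}$, summing $n_1$ from $r$ to $n_2$ inside $T_{r}^{(k)}$ with this formula reinstates exactly the factor $(q)_{n_2}^{2}/\bigl((q)_{n_2-r}(q)_{n_2+r}\bigr)$ attached to the remaining $(k-1)$-fold sum, multiplied by the constant $q^{r}/(1-q^{r})^{2}$; hence $T_{r}^{(k)}=\frac{q^{r}}{(1-q^{r})^{2}}\,T_{r}^{(k-1)}$ and the induction closes with no accumulating error terms (your $k=1$ computation is just the $N\to\infty$ limit of the displayed identity). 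With this one finite summation supplied, your outline becomes a complete proof along essentially Garvan's lines; without it, the crux of the theorem is missing.
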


We now take the Bailey pair $(\alpha_n, \beta_n)$:
\begin{align*}
\alpha_{n}(a,q) &=\frac{(1-aq^{2n})(a)_n}{(1-a)(q)_n} (-1)^n a^{r n} q^{n(n-1)/2+r n^2}\\
\beta_n(a,q) &= \sum_{n\ge n_1\ge  \cdots \ge n_{r-1} \ge 0} \frac{a^{n_1+\cdots+n_{r-1}} q^{n_1^2+\cdots +n^2_{r-1}}}{(q)_{n-n_1}(q)_{n_1-n_2}\cdots (q)_{n_{r-1}}} 
\end{align*}
Let $a=1$. Then
\begin{align*}
\alpha_n(1,q)&=\begin{cases} 1 & \text{if $n=0$},\\
(-1)^n q^{n(n-1)/2+r n^2} (1+q^n) & \text{if $n\ge 1$},
\end{cases}\\
\beta_n(1,q)&=\begin{cases} 1 & \text{if $n=0$}, \\
{ \sum_{n\ge n_1\geq\cdots\geq n_{r-1}\geq 0} \frac{q^{n_1^2+\cdots +n^2_{r-1}}}{(q)_{n-n_1}(q)_{n_1-n_2}\cdots (q)_{n_{r-1}}} } & \text{if $n\ge 1$}.
\end{cases}\\
\end{align*}
Substituting $(\alpha_n(1,q), \beta_n(1,q))$  in Theorem~\ref{ThmGarvan}, we obtain
\begin{align}\label{appbp}
&\sum_{n_k\ge \cdots \ge n_1\ge 1} \frac{(q)_{n_1}^{2} q^{n_1+\cdots +n_{k}}}{(1-q^{n_1})^2 \cdots (1-q^{n_k})^2}
\sum_{m_1\geq \cdots \geq m_{r-1} \geq 0} \frac{q^{m_{1}^2+\cdots+m_{r-1}^{2}}}{(q)_{n_1-m_{1}}(q)_{m_1-m_{2}}\cdots (q)_{m_{r-1}}} \nonumber\\
&=\sum_{n_k\ge \cdots \ge n_1\ge 1} \frac{q^{n_1+\cdots +n_{k}}}{(1-q^{n_1})^2 \cdots (1-q^{n_k})^2} +\sum_{n=1}^{\infty} \frac{(-1)^{n} q^{n(n-1)/2+ r n^2+ kn} (1+q^n)}{(1-q^n)^{2k}}.
\end{align}

In the following theorem, we obtain the generating function of ${_j}\textup{spt}_k(n)$.
\begin{theorem}\label{jsptk}
We have
\begin{align}\label{gtjsptk}
& \sum_{n=1}^{\infty} {_j}\textup{spt}_k(n) q^n  \notag \\
&=\sum_{n_{k}\ge \cdots \ge n_1 \ge m_{1}\ge \cdots\ge m_{j-1} \ge 1} \frac{  q^{n_{k}+\cdots+ n_{1} +{m_{1}^2+\cdots+m_{j-1}^{2}} } (q)_{n_1}    }  {(1-q^{n_k})^2 \cdots (1-q^{n_1})^2  (q^{n_1+1})_{\infty} (q)_{n_1-m_{1}}(q)_{m_1-m_{2}}\cdots(q)_{m_{j-1}}}.
\end{align}
\end{theorem}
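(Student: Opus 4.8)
The plan is to push each symmetrized moment through the Bailey-pair identity (\ref{appbp}), form the difference that defines ${}_j\textup{spt}_k$, and exploit the fact that the ``crank'' piece of (\ref{appbp}) does not depend on the Bailey parameter and therefore cancels. First I would convert the bilateral series in (\ref{genjmu2k}) into a one-sided series. Pairing the index $n\ge 1$ with $-n$ and using $1-q^{-n}=-q^{-n}(1-q^n)$ (so that $(1-q^{-n})^{2k}=q^{-2kn}(1-q^n)^{2k}$) together with $(-1)^{-n-1}=(-1)^{n-1}$, the two contributions merge into
\begin{equation*}
\sum_{n=1}^{\infty}{}_j\mu_{2k}(n)q^n=\frac{1}{(q)_{\infty}}\sum_{n=1}^{\infty}\frac{(-1)^{n-1}q^{n((2j-1)n-1)/2+kn}(1+q^n)}{(1-q^n)^{2k}}.
\end{equation*}
Since $n((2j-1)n-1)/2=n(n-1)/2+(j-1)n^2$, the right side is exactly $-\tfrac{1}{(q)_\infty}B_{j-1}$, where I write $B_r:=\sum_{n\ge1}(-1)^n q^{n(n-1)/2+rn^2+kn}(1+q^n)/(1-q^n)^{2k}$ for the last sum occurring in (\ref{appbp}). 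Likewise ${}_{j+1}\mu_{2k}$ corresponds to $r=j$, i.e.\ to $B_j$.

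Next I would substitute into ${}_j\textup{spt}_k(n)={}_j\mu_{2k}(n)-{}_{j+1}\mu_{2k}(n)$ from (\ref{def3.4}), which gives generating function $\tfrac{1}{(q)_\infty}(B_j-B_{j-1})$. This is the crux: writing $A_r$ for the double sum on the left of (\ref{appbp}) and $C:=\sum_{n_k\ge\cdots\ge n_1\ge1}q^{n_1+\cdots+n_k}/\prod_{i}(1-q^{n_i})^2$ for the $r$-independent crank sum, identity (\ref{appbp}) reads $B_r=A_r-C$. Hence $B_j-B_{j-1}=A_j-A_{j-1}$, the term $C$ cancelling, and
\begin{equation*}
\sum_{n=1}^{\infty}{}_j\textup{spt}_k(n)q^n=\frac{1}{(q)_{\infty}}\bigl(A_j-A_{j-1}\bigr).
\end{equation*}

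Finally I would simplify $A_j-A_{j-1}$. Factoring $A_r=\sum_{n_k\ge\cdots\ge n_1\ge1}\frac{(q)_{n_1}^2\,q^{n_1+\cdots+n_k}}{(1-q^{n_1})^2\cdots(1-q^{n_k})^2}\,S_r(n_1)$, where $S_r(n_1)$ is the inner sum over $m_1\ge\cdots\ge m_{r-1}\ge0$, the outer factor is common, so the difference carries the inner factor $S_j(n_1)-S_{j-1}(n_1)$. I would split $S_j(n_1)$ according to whether $m_{j-1}=0$ or $m_{j-1}\ge1$: the $m_{j-1}=0$ terms reproduce $S_{j-1}(n_1)$ verbatim (since $q^{0}=(q)_0=1$ and $(q)_{m_{j-2}-0}=(q)_{m_{j-2}}$), so the difference is precisely the $m_{j-1}\ge1$ part,
\begin{equation*}
S_j(n_1)-S_{j-1}(n_1)=\sum_{n_1\ge m_1\ge\cdots\ge m_{j-1}\ge1}\frac{q^{m_1^2+\cdots+m_{j-1}^2}}{(q)_{n_1-m_1}(q)_{m_1-m_2}\cdots(q)_{m_{j-1}}}.
\end{equation*}
Absorbing $\tfrac{1}{(q)_\infty}$ via $(q)_{n_1}^2/(q)_\infty=(q)_{n_1}/(q^{n_1+1})_\infty$ then produces exactly (\ref{gtjsptk}) (and for $j=1$ the empty inner sum collapses to Garvan's (\ref{garvansptk}), as it should).

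I expect the real care to lie in the bilateral folding of the first step (getting the signs and the $q^{2kn}$ shift correct) and in the telescoping identity $S_j-S_{j-1}$; the conceptual heart, which makes the whole theorem work, is simply that the Bailey parameter enters (\ref{appbp}) only through $A_r$ and not through the crank term $C$, so the spt-difference collapses to a single clean multiple sum.
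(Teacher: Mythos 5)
Your proposal is correct and follows essentially the same route as the paper: both arguments specialize (\ref{appbp}) at $r=j$ and $r=j-1$, subtract so that the $r$-independent sum $C$ cancels, identify the resulting difference of theta-type series with $(q)_{\infty}\sum_{n\ge 1}{}_j\textup{spt}_k(n)q^n$ via (\ref{genjmu2k}), and reduce the left-hand side to the single multiple sum because the $m_{j-1}=0$ terms of the inner sum reproduce the $r=j-1$ case. The only differences are organizational --- you fold the bilateral series into a one-sided one before invoking (\ref{appbp}), and you make the telescoping of the inner sums more explicit than the paper does.
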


\begin{proof}

Substituting $r=j$ and $j-1$ in \eqref{appbp}, we obtain
\begin{align}\label{appbp1}
&\sum_{n_k\ge \cdots \ge n_1\ge 1} \frac{(q)_{n_1}^{2} q^{n_1+\cdots +n_{k}}}{(1-q^{n_1})^2 \cdots (1-q^{n_k})^2}
\sum_{m_1\geq\cdots\geq m_{j-1}\geq 0} \frac{q^{m_{1}^2+\cdots+m_{j-1}^{2}}}{(q)_{n_1-m_{1}}(q)_{m_1-m_{2}}\cdots(q)_{m_{j-1}}}\nonumber\\
&=\sum_{n_k\ge \cdots \ge n_1\ge 1} \frac{q^{n_1+\cdots +n_{k}}}{(1-q^{n_1})^2 \cdots (1-q^{n_k})^2} +\sum_{n=1}^{\infty} \frac{(-1)^{n} q^{n(n-1)/2+jn^2+ kn} (1+q^n)}{(1-q^n)^{2k}}
\end{align}
and
\begin{align}\label{appbp2}
&\sum_{n_k\ge \cdots \ge n_1\ge 1} \frac{(q)_{n_1}^{2} q^{n_1+\cdots +n_{k}}}{(1-q^{n_1})^2 \cdots (1-q^{n_k})^2}
\sum_{m_1\geq\cdots\geq m_{j-2}\geq 0} \frac{q^{m_{1}^2+\cdots+m_{j-2}^{2}}}{(q)_{n_1-m_{1}}(q)_{m_1-m_{2}}\cdots(q)_{m_{j-2}}}\nonumber\\
&=\sum_{n_k\ge \cdots \ge n_1\ge 1} \frac{q^{n_1+\cdots +n_{k}}}{(1-q^{n_1})^2 \cdots (1-q^{n_k})^2} +\sum_{n=1}^{\infty} \frac{(-1)^{n} q^{n(n-1)/2+(j-1)n^2+ kn} (1+q^n)}{(1-q^n)^{2k}}.
\end{align}
Subtracting (\ref{appbp2}) from (\ref{appbp1}), we have
\begin{align}\label{appbp3}
&\sum_{n_k\ge \cdots \ge n_1\ge 1} \frac{(q)_{n_1}^{2} q^{n_1+\cdots +n_{k}}}{(1-q^{n_1})^2 \cdots (1-q^{n_k})^2}
\sum_{m_1\geq\cdots\geq m_{j-1}\geq 0} \frac{q^{m_{1}^2+\cdots+m_{j-1}^{2}}}{(q)_{n_1-m_{1}}(q)_{m_1-m_{2}}\cdots(q)_{m_{j-1}}}\nonumber\\
&\quad-\sum_{n_k\ge \cdots \ge n_1\ge 1} \frac{(q)_{n_1}^{2} q^{n_1+\cdots +n_{k}}}{(1-q^{n_1})^2 \cdots (1-q^{n_k})^2}
\sum_{m_1\geq\cdots\geq m_{j-2}\geq 0} \frac{q^{m_{1}^2+\cdots+m_{j-2}^{2}}}{(q)_{n_1-m_{1}}(q)_{m_1-m_{2}}\cdots(q)_{m_{j-2}}}\nonumber\\
&=\sum_{n=1}^{\infty} \frac{(-1)^{n} q^{n(n-1)/2+jn^2+ kn} (1+q^n)}{(1-q^n)^{2k}}-\sum_{n=1}^{\infty} \frac{(-1)^{n} q^{n(n-1)/2+(j-1)n^2+ kn} (1+q^n)}{(1-q^n)^{2k}}.
\end{align}

First, the right-hand side of (\ref{appbp3}) can be written as
\begin{align}\label{appbprhs}
&\sum_{n=-\infty \atop n\neq 0}^{\infty}\frac{(-1)^{n} q^{n(n-1)/2+jn^2+ kn}}{(1-q^n)^{2k}}-\sum_{n=-\infty \atop n\neq 0}^{\infty}\frac{(-1)^{n} q^{n(n-1)/2+(j-1)n^2+ kn}}{(1-q^n)^{2k}}\nonumber\\
&=\sum_{n=-\infty \atop n\neq 0}^{\infty}\frac{(-1)^{n-1} q^{n((2j-1)n+1)/2 +kn}}{(1-q^n)^{2k}}-\sum_{n=-\infty \atop n\neq 0}^{\infty}\frac{(-1)^{n-1} q^{n((2j+1)n+1)/2 +kn}}{(1-q^n)^{2k}}\nonumber\\
&=(q)_{\infty}\sum_{n=1}^{\infty}({_j}\mu_{2k}(n)-{_{j+1}}\mu_{2k}(n))q^n\nonumber\\
&=(q)_{\infty}\sum_{n=1}^{\infty}{_j}\textup{spt}_{k}(n)q^n,
\end{align}
where we invoked (\ref{genjmu2k}) in the penultimate step.
Also, the left-hand side of (\ref{appbp3}) can be written as
\begin{align}\label{appbplhs}
&\sum_{n_k\ge \cdots \ge n_1\ge 1} \frac{(q)^2_{n_1} q^{n_1+\cdots +n_{k}}}{(1-q^{n_1})^2 \cdots (1-q^{n_k})^2}
   \sum_{m_1\geq\cdots\geq m_{j-1}\geq 1} \frac{q^{m_{1}^2+\cdots+m_{j-1}^{2}}}{(q)_{n_1-m_{1}}(q)_{m_1-m_{2}}\cdots(q)_{m_{j-1}}}   \nonumber\\
&=\sum_{n_{k}\ge \cdots \ge n_1 \ge m_{1}\ge \cdots\ge m_{j-1} \ge 1} \frac{  (q)^2_{n_1} q^{n_{k}+\cdots+ n_{1} +{m_{1}^2+\cdots+m_{j-1}^{2}} }   }  {(1-q^{n_k})^2 \cdots (1-q^{n_1})^2 (q)_{n_1-m_{1}}(q)_{m_1-m_{2}}\cdots(q)_{m_{j-1}}},
\end{align}
since $(q)_{n_1-m_1}=0$ unless $n_1\geq m_1$. Thus, from (\ref{appbprhs}) and (\ref{appbplhs}), we have
\begin{align}\label{gtjsptk1}
&(q)_{\infty}\sum_{n=1}^{\infty} {_j}\textup{spt}_k(n) q^n\nonumber\\
&=\sum_{n_{k}\ge \cdots \ge n_1 \ge m_{1}\ge \cdots\ge m_{j-1} \ge 1} \frac{  (q)^2_{n_1} q^{n_{k}+\cdots+ n_{1} +{m_{1}^2+\cdots+m_{j-1}^{2}} }   }  {(1-q^{n_k})^2 \cdots (1-q^{n_1})^2 (q)_{n_1-m_{1}}(q)_{m_1-m_{2}}\cdots(q)_{m_{j-1}}}.
\end{align}
Finally, dividing both sides of (\ref{gtjsptk1}) by $(q)_{\infty}$, we arrive at (\ref{gtjsptk}).
\end{proof}

\textbf{Remarks.} 1. Equation (\ref{gtjsptk}) can also be written as
\begin{align}
& \sum_{n=1}^{\infty} {_j}\textup{spt}_k(n) q^n \notag \\
&= \sum_{n_{k+j-1}\ge \cdots \ge n_{j} \ge \cdots \ge n_1\ge 1} 
\frac{q^{n_{k+j-1}+\cdots +n_{j}}}{(1-q^{n_{k+j-1}})^2\cdots (1-q^{n_{j}})^2 (q^{n_{j}+1})_{\infty} }\left[\begin{matrix} n_{j}\\ n_{j-1} \end{matrix}\right] \cdots \left[\begin{matrix} n_2\\ n_1 \end{matrix}\right] q^{n_{j-1}^2+\cdots+ n_1^2}.  \label{remark}
\end{align}
Also, note that ${_1}\textup{spt}_k (n)=\textup{spt}_k(n)$.\\

2. From (\ref{sptmot}) and (\ref{def3.4}), we have
\begin{equation}
{}_j\textup{spt}_{1}(n)=\textup{Spt}_{j}(n)-\textup{Spt}_{j-1}(n),
\end{equation}
which implies that
\begin{equation}
\textup{Spt}_{j}(n)=\sum_{\ell=1}^{j}{}_{\ell}\textup{spt}_{1}(n).
\end{equation}
This in turn gives $\textup{Spt}_{j}(n)={}_1\mu_{2}(n)-{}_{j+1}\mu_{2}(n)$.

\section{A combinatorial interpretation of ${_j}\textup{spt}_k(n)$}

We recall the higher order spt-function $\textup{spt}_k(n)$ studied by Garvan. For a partition $\pi$, let 
\begin{align}
w_k(\pi)=\sum_{m_1+\cdots +m_r=k \atop 1\le r\le k} \binom{f_{t_1}+m_1-1}{2m_1-1} \sum_{t_1< t_2<t_3\cdots <t_r} \binom{f_{t_2}+m_2}{2m_2} \cdots \binom{f_{t_r}+m_r}{2m_r}, \label{wk}
\end{align}
where the outer sum is over all compositions $m_1+\cdots +m_r$ of $k$, $t_1, t_2,\ldots, t_r$ are distinct parts of $\pi$ with the smallest part $t_1$, and  $f_{t}$ denotes the number of occurrences of $t$ in the partition $\pi$. He then defined  $\text{spt}_k(n)$ as 
\begin{align*}
\textup{spt}_k(n)=\sum_{\pi} w_{k}(\pi),
\end{align*}
where the sum is over all partitions $\pi$ of $n$, and showed that the generating function of $\textup{spt}_{k}(n)$ is given by (\ref{garvansptk}).

We now generalize this to ${_j}\textup{spt}_k(n)$.  Let $j,k\ge 1$. For a partition $\pi$, we define a weight
\begin{align*}
{_j}w_k(\pi)=\sum_{t_1} \sum_{m_1+\cdots +m_r=k\atop 1\le r\le k} \binom{f'_{t_1}+m_1-1}{2m_1-1} \sum_{t_1< t_2<\cdots <t_r} \binom{ f_{t_2} + m_2}{2 m_2} \cdots \binom{f_{t_r} + m_r}{2 m_r},
\end{align*}
where the outer sum is over all parts $t_1$ right above each of the parts contained in the $(j-1)$st lower-Durfee square, and the middle sum  is over all compositions $m_1+\cdots +m_r$ of $k$, $t_2,\ldots, t_r$ are distinct parts of $\pi$ greater than $t_1$ and $f'_t$ denotes the mark of $t$. Then, we obtain the following theorem.

\begin{theorem}
With ${_j}\textup{spt}_k$ defined in \eqref{def3.4},  we have
\begin{align*}
{_j}\textup{spt}_k(n)=\sum_{\pi} {_j}w_k(\pi),
\end{align*} 
where the sum is over all partitions of $n$. 
\end{theorem}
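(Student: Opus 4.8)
The goal is to show that the combinatorial sum $\sum_{\pi} {}_jw_k(\pi)$ agrees with the generating function for ${}_j\textup{spt}_k(n)$ obtained in Theorem \ref{jsptk}, specifically in its alternate form \eqref{remark}. The strategy is to interpret the right-hand side of \eqref{remark} as a generating function over partitions and to match its combinatorial meaning against the definition of ${}_jw_k(\pi)$. The plan is to build on the interpretation already established for $\textup{Spt}_j(n)$ in Theorem \ref{sptwkpi} and Garvan's interpretation of $\textup{spt}_k(n)$ via the weight $w_k(\pi)$ in \eqref{wk}, combining the two ideas.

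\textbf{First steps.} I would begin by decomposing the series in \eqref{remark}. As in the proof of Theorem \ref{sptwkpi}, the inner factor $\sum \left[{n_j\atop n_{j-1}}\right]\cdots\left[{n_2\atop n_1}\right] q^{n_1^2+\cdots+n_{j-1}^2}$ generates a Rogers-Ramanujan partition with largest part $\le n_j$ and at most $j-1$ successive Durfee squares, which by Lemma \ref{lemma3} corresponds precisely to the portion of $\pi$ lying in and below the $(j-1)$st lower-Durfee square. The remaining factor $q^{n_{k+j-1}+\cdots+n_j}/\bigl((1-q^{n_{k+j-1}})^2\cdots(1-q^{n_j})^2(q^{n_j+1})_\infty\bigr)$ with $n_{k+j-1}\ge\cdots\ge n_j\ge 1$ is exactly the factor appearing in Garvan's generating function \eqref{garvansptk}, except that the role of the smallest part is now played by $n_j$, the part sitting right above the $(j-1)$st lower-Durfee square, rather than by the overall smallest part of $\pi$. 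Thus I expect the parts $t_1, t_2, \ldots, t_r$ in the definition of ${}_jw_k(\pi)$ to be exactly the distinct part-values occurring at or above that level, with $t_1$ being the smallest such value.

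\textbf{The combinatorial matching.} The crux is to show that Garvan's weight analysis applies verbatim to the parts lying at and above the level of the $(j-1)$st lower-Durfee square, with the single modification that occurrences of the bottommost such part $t_1$ are counted by its \emph{mark} $f'_{t_1}$ rather than by the full multiplicity $f_{t_1}$. The binomial coefficients $\binom{f_{t_2}+m_2}{2m_2},\ldots,\binom{f_{t_r}+m_r}{2m_r}$ arise in Garvan's argument from expanding the $2k$-th symmetrized crank moments via the factors $(1-q^{n_i})^{-2}$ and selecting parts; the leading factor $\binom{f'_{t_1}+m_1-1}{2m_1-1}$ arises from the factor $q^{n_j}/(1-q^{n_j})^2$ tracking the special part $t_1=n_j$ together with the mark bookkeeping inherited from Theorem \ref{sptwkpi}. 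I would make this precise by reprising Garvan's derivation of \eqref{wk} from \eqref{garvansptk} with the outer summation over $t_1$ restricted to the parts immediately above each part contained in the $(j-1)$st lower-Durfee square.

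\textbf{Main obstacle.} The hard part will be reconciling the two distinct bookkeeping conventions, namely the \emph{mark}-based weighting coming from $\textup{Spt}_j$ (where $f'_t$ counts occurrences up to a fixed position within a repeated part) with the \emph{multiplicity}-based weighting coming from $\textup{spt}_k$ (where $f_t$ counts all occurrences). I must verify that the summation over admissible positions for the boundary part $t_1$ produces exactly the shifted binomial $\binom{f'_{t_1}+m_1-1}{2m_1-1}$ and that no part is double-counted across the Durfee-square boundary. The cleanest route is likely to track a bijection between the triples $(n_j,\ldots,n_{k+j-1})$-data and the choice of $(t_1,\ldots,t_r)$ together with their composition $m_1+\cdots+m_r=k$, extending the pairing $(\mu^i,\nu^i)$ used in the proof of Theorem \ref{sptwkpi}; once this correspondence is established, extracting the coefficient of $q^n$ and summing over all partitions $\pi$ of $n$ yields the claimed identity.
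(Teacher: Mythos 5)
Your proposal follows essentially the same route as the paper: decompose the right-hand side of \eqref{remark} into the Gaussian-binomial factor generating the Rogers--Ramanujan portion below the split (handled via Lemma \ref{lemma3} as in Theorem \ref{sptwkpi}) and the Garvan factor from \eqref{garvansptk} generating the upper portion with weight $w_k$, then observe that the admissible split points are exactly the parts right above the parts of the $(j-1)$st lower-Durfee square and that the multiplicity $f_{t_1}$ of the smallest part of the upper piece becomes the mark $f'_{t_1}$ in $\pi$, which is precisely how the paper resolves what you call the main obstacle. The only small slip is that in \eqref{remark} the inner sum has $n_1\ge 1$, so the lower portion has exactly (not at most) $j-1$ successive Durfee squares; this does not affect the argument.
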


\proof 
We take the right hand side of \eqref{remark}:
\begin{align}
\sum_{n_{k+j-1}\ge \cdots \ge n_{j} \ge \cdots \ge n_1\ge 1} 
\frac{q^{n_{k+j-1}+\cdots +n_{j}}}{(1-q^{n_{k+j-1}})^2\cdots (1-q^{n_{j}})^2 (q^{n_{j}+1})_{\infty} }\left[\begin{matrix} n_{j}\\ n_{j-1} \end{matrix}\right] \cdots \left[\begin{matrix} n_2\\ n_1 \end{matrix}\right] q^{n_{j-1}^2+\cdots+ n_1^2}. \label{remarkright}
\end{align}
Then, we see that
\begin{align*}
\sum_{n_{j} \ge \cdots \ge n_1\ge 1} 
\left[\begin{matrix} n_{j}\\ n_{j-1} \end{matrix}\right] \cdots \left[\begin{matrix} n_2\\ n_1 \end{matrix}\right] q^{n_{j-1}^2+\cdots+ n_1^2}
\end{align*}
generates partitions $\mu$ into parts less than or equal to $n_j$ with exactly $j-1$ successive Durfee squares.  Also, it follows from \eqref{garvansptk} that
\begin{align*}
\sum_{n_{k+j-1}\ge \cdots \ge n_{j}\ge 1} 
\frac{q^{n_{k+j-1}+\cdots +n_{j}}}{(1-q^{n_{k+j-1}})^2\cdots (1-q^{n_{j}})^2 (q^{n_{j}+1})_{\infty} }.
\end{align*}
generates weighted partitions $\nu$ with the smallest part equal to $n_j$ and weight $w_k(\nu)$ defined in \eqref{wk}.

Clearly, the union of $\mu$ and $\nu$ is generated by \eqref{remarkright} with weight $w_k(\nu)$.  With the same argument in the proof of Theorem~\ref{sptwkpi}, a partition $\pi$ generated by \eqref{remarkright} can be split into such $\mu$ and $\nu$ by separating the parts above any part in its $(j-1)$st successive lower-Durfee square.  That is, the part right above each of the parts in the $(j-1)$st successive lower-Durfee square can be the smallest part of $\nu$. In addition, the number of occurrences of the smallest part in $\nu$ is equal to its mark in $\pi$, namely $f_{n_j}$ in $\nu$ equals $f'_{n_j}$ in $\pi$.  Thus, the partition $\pi$ is generated with weight ${_j}w_k(\pi)$ as desired. 
\endproof

\section{Inequality between the moments of $j$-rank and $(j+1)$-rank}
Since the idea is completely analogous to the one used for proving $M_{2k}(n)>N_{2k}(n)$, we just give the main results below. With ${}_j\mu_{k}(n)$ defined in (\ref{ksymj}), we have the following:
\begin{theorem}
For $k\geq 1$,
\begin{align}\label{relos}
{}_j\mu_{2k}(n)&=\frac{1}{(2k)!}\sum_{m=-n}^{n}g_{k}(m)N_{j}(m,n),\nonumber\\
{}_jN_{2k}(n)&=\sum_{t=1}^{k}(2t)!S^{*}(k,t){}_j\mu_{2t}(n).
\end{align}
\end{theorem}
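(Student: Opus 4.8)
The plan is to derive the first identity directly from the definition (\ref{ksymj}) of ${}_j\mu_k(n)$ together with the symmetry $N_j(m,n)=N_j(-m,n)$, and then to obtain the second identity as a formal consequence of the first and the defining relation of the numbers $S^*(k,t)$. Both sums over $m$ are finite, since $N_j(m,n)=0$ for $|m|>n$, so no convergence issues arise.

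First I would set $k\to 2k$ in (\ref{ksymj}) to get ${}_j\mu_{2k}(n)=\sum_{m}\binom{m+k-1}{2k}N_j(m,n)$, and expand the binomial coefficient as the product of $2k$ consecutive integers $\binom{m+k-1}{2k}=\frac{1}{(2k)!}\prod_{i=-k}^{k-1}(m+i)$. The key observation is to compare this product with $g_k(m)=\prod_{j=0}^{k-1}(m^2-j^2)$. Both contain the common even factor $\prod_{j=1}^{k-1}(m^2-j^2)$; the remaining factors are $m(m-k)$ for the product and $m^2$ for $g_k(m)$. A one-line computation then gives the polynomial identity $\prod_{i=-k}^{k-1}(m+i)-g_k(m)=-km\prod_{j=1}^{k-1}(m^2-j^2)$. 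Since the right-hand side is an odd polynomial in $m$, summing it against $N_j(m,n)$ over all $m$ yields zero by $N_j(m,n)=N_j(-m,n)$. Hence $\sum_m\prod_{i=-k}^{k-1}(m+i)\,N_j(m,n)=\sum_m g_k(m)N_j(m,n)$, and dividing by $(2k)!$ produces the first identity.

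For the second identity I would start from ${}_jN_{2k}(n)=\sum_m m^{2k}N_j(m,n)$, which is (\ref{3rm}) with $t=2k$, and invoke the defining relation $m^{2k}=\sum_{t=1}^{k}S^*(k,t)g_t(m)$ of the numbers $S^*(k,t)$. Substituting this and interchanging the two finite sums gives ${}_jN_{2k}(n)=\sum_{t=1}^{k}S^*(k,t)\sum_m g_t(m)N_j(m,n)$. Applying the first identity to the inner sum, namely $\sum_m g_t(m)N_j(m,n)=(2t)!\,{}_j\mu_{2t}(n)$, immediately yields ${}_jN_{2k}(n)=\sum_{t=1}^{k}(2t)!\,S^*(k,t)\,{}_j\mu_{2t}(n)$, as desired.

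The only genuine content is the polynomial identity $\prod_{i=-k}^{k-1}(m+i)-g_k(m)=-km\prod_{j=1}^{k-1}(m^2-j^2)$ together with the recognition that its oddness in $m$ annihilates the sum against $N_j(m,n)$; the rest is bookkeeping that mirrors Garvan's treatment of the rank and crank moments. I expect the main point requiring care to be matching the factors of the two degree-$2k$ products correctly, so that their difference is seen to be exactly an odd multiple of the even factor $\prod_{j=1}^{k-1}(m^2-j^2)$, after which the symmetry of $N_j$ does all the work.
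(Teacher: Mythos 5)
Your proof is correct, and it takes exactly the route the paper intends: the paper omits the proof entirely, remarking only that ``the idea is completely analogous to the one used for proving $M_{2k}(n)>N_{2k}(n)$'' in Garvan's work, and your argument is precisely that analogue. The polynomial identity $\prod_{i=-k}^{k-1}(m+i)-g_k(m)=-km\prod_{i=1}^{k-1}(m^2-i^2)$, the use of the symmetry $N_j(m,n)=N_j(-m,n)$ to kill the odd part, and the substitution of $m^{2k}=\sum_{t=1}^{k}S^{*}(k,t)g_t(m)$ all check out.
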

\begin{proof}[Theorem \textup{\ref{genineq}}][]
Suppose $k\geq 1$. From (\ref{remark}) and (\ref{def3.4}), we have
\begin{equation}
\sum_{n=1}^{\infty}\left({}_j\mu_{2t}(n)-{}_{j+1}\mu_{2t}(n)\right)q^n=\frac{q^{t+j-1}}{(1-q)^{2(t+j-1)}}\cdot\frac{q^{j-1}}{(q^2)_{\infty}}+\cdots,
\end{equation}
and hence
\begin{equation}
{}_j\mu_{2t}(n)>{}_{j+1}\mu_{2t}(n),
\end{equation}
for all $n\geq t\geq 1$ and $j\geq 1$. Using (\ref{relos}) and the fact that $S^{*}(k,t)$ are positive integers, we have
\begin{equation}
{}_jN_{2k}(n)-{}_{j+1}N_{2k}(n)=\sum_{t=1}^{k}(2t)!S^{*}(k,t)\left({}_j\mu_{2t}(n)-{}_{j+1}\mu_{2t}(n)\right)\geq 2\left({}_j\mu_{2t}(n)-{}_{j+1}\mu_{2t}(n)\right)>0,
\end{equation}
for all $n\geq 1$. 
\end{proof}
A simple consequence of Theorem \ref{genineq} is that $M_{2k}(n)={}_1\mu_{2k}(n)>{}_j\mu_{2k}(n)$ for all $j>1$.\\

\textbf{Acknowledgements.} The authors sincerely thank Bruce C.~Berndt for several suggestions which improved the quality of this paper. This work was done while the second author was visiting University of Queensland. She thanks Ole Warnaar for his warm hospitality.

\end{document}